\DeclareMathOperator\supp{supp}
\newcommand{\norm}[1]{\left\lVert#1\right\rVert}
\theoremstyle{plain}
\newtheorem{theorem}{Theorem}[section]
\newtheorem{Corollary}[theorem]{Corollary}
\newtheorem{lemma}[theorem]{Lemma}
\newtheorem{Definition}[theorem]{Definition}
\theoremstyle{remark}
\newtheorem{Remark}[theorem]{Remark}
\newcommand{\Hmm}[1]{\leavevmode{\marginpar{\tiny%
$\hbox to 0mm{\hspace*{-0.5mm}$\leftarrow$\hss}%
\vcenter{\vrule depth 0.1mm height 0.1mm width \the\marginparwidth}%
\hbox to 0mm{\hss$\rightarrow$\hspace*{-0.5mm}}$\\\relax\raggedright #1}}}
\newcommand\reallywidehat[1]{%
\savestack{\tmpbox}{\stretchto{%
  \scaleto{%
    \scalerel*[\widthof{\ensuremath{#1}}]{\kern-.6pt\bigwedge\kern-.6pt}%
    {\rule[-\textheight/2]{1ex}{\textheight}}
  }{\textheight}%
}{0.5ex}}%
\stackon[1pt]{#1}{\tmpbox}%
}
\numberwithin{equation}{section}
\begin{document}
\author{E. Ba\c{s}ako\u{g}lu}
\author{C. Sun}
\author{N. Tzvetkov}
\author{Y. Wang}
\address{Institute of Mathematical Sciences, ShanghaiTech University, Shanghai, 201210, China}
\email{ebasakoglu@shanghaitech.edu.cn}
\address{CNRS, Université Paris-Est Créteil, Laboratoire d’Analyse et de Mathématiques appliquées,
UMR 8050 du CNRS, 94010 Créteil cedex, France}
\email{chenmin.sun@cnrs.fr}

\address{Ecole Normale Supérieure de Lyon, UMPA, UMR CNRS-ENSL 5669, 46, allée d’Italie, 69364-Lyon Cedex 07, France}
\email{nikolay.tzvetkov@ens-lyon.fr}

\address{School of Mathematics, University of Birmingham, Watson Building, Edgbaston, Birmingham B15 2TT, United Kingdom}
\email{y.wang.14@bham.ac.uk}

\subjclass[2010]{35Q20, 35A01, 82C40}
\keywords{Boltzmann equation, Periodic problem, Strichartz estimate, Local well-posedness}
\begin{abstract}
     We study the Boltzmann equation with the constant collision kernel in the case of spatially periodic domain $\mathbb{T}^d$, $d\geq 2$. Using the existing techniques from nonlinear dispersive PDEs, we prove the local well-posedness result in $L^{2,r}_vH^s_x$ for $s>\frac{d}{2}-\frac{1}{4}$ and $r>\frac{d}{2}$. To reach the result, the main tool we establish is the $L^4$ Strichartz estimate for solutions to the corresponding linear equation.
\end{abstract}
\title[Well-posedness for Boltzmann Equation]{Local well-posedness for the periodic Boltzmann equation with constant collision kernel}
\maketitle
\section{Introduction}
The Boltzmann equation is the fundamental equation of collisional kinetic theory, which describes the state of the dilute gas (or plasma) modelled by a distribution function $f(t, x, v)\geq 0$ in the particle phase space. In this paper, we are concerned with the periodic Cauchy problem for Boltzmann equation with constant collision kernel, which is written in the following form: 
\begin{equation}\label{BoltzmannIVP}
    \begin{cases}
        \partial_{t}f+v\cdot \nabla_xf=Q(f,f),\\
        f(0,x,v)=f_{0}(x,v),
    \end{cases}
\end{equation}
where $x\in \mathbb{T}^d,\, v\in \mathbb{R}^d$, $d\geq 2$, correspond to the position and accompanying velocity of a typical particle respectively, and the quadratic operator $Q$ (conventionally, $Q$ is called as collision operator) is defined by 
\begin{align*}
 Q(f,f)(t, x, v)=\int_{\mathbb{S}^{d-1}}\int_{\mathbb{R}^d}[f(t,x,u^*)f(t,x,v^*)-f(t,x,u)f(t,x,v)]\,dud\omega,    
\end{align*}
which describes the interaction between colliding particles. From the viewpoint of physical interpretation, the variables $(u,v)$ represent the velocities for a pair of particles before collision, and the variables $(u^*, v^*)$ represent the respective
velocities after collision, which are given by
\begin{align*}
    u^*=u+(\omega\cdot (v-u))\omega,\quad
    v^*=v-(\omega\cdot (v-u))\omega.
\end{align*}
In the above formulation, the unit vector $\omega\in \mathbb{S}^{d-1}\subset \mathbb{R}^d$ is a parameter associated to the deflection angle in the collision of the particles. The Boltzmann collision operator can then be split into a gain and a loss terms
\begin{align*}
    Q(f,g)=Q^+(f,g)-Q^-(f,g)
\end{align*}
where the gain term is \begin{align*}
  Q^+(f,g)=\int_{\mathbb{S}^{d-1}}\int_{\mathbb{R}^d}f(v^*)g(u^*)\,dud\omega 
\end{align*}
and the loss term is
\begin{align*}
Q^-(f,g)=\int_{\mathbb{S}^{d-1}}\int_{\mathbb{R}^d}f(v)g(u)\,dud\omega=c f(v)\int_{\mathbb{R}^d}g(u)\,du.
\end{align*}
Since the collisions in thermodynamical system are assumed to be elastic for the Boltzmann equation, the following conservation relations of momentum and kinetic energy hold: 
\begin{align}\label{conservation}
  u+v=u^*+v^*,\,\,\, |u|^2+|v|^2=|u^*|^2+|v^*|^2. 
\end{align}

Investigating local/global well-posedness for the Boltzmann equation is of great interest among mathematicians, which can also be useful for the study of other problems such as the mathematical derivation of hydrodynamic limits of the Boltzmann equation, and the derivation of the Boltzmann equation from quantum many-body dynamics or the classical particle systems, etc. In this respect, a large number of mathematical studies have been devoted to the Boltzmann equation up until now, in which the various mathematical frameworks have been developed in order to construct solutions in different settings. For instance, we refer the readers to \cite{Alexandreglobal, Alexandre, Ampatzoglou, Arsenio, CDP, CDPmoment, CDPsmall, Chen-Holmer, Chen-Shen-Zhang, Chen-Shen-Zhang2, DiPerna, DuanR, DuanR2, Hel, Hel2, KanielShinbrot, Sohinger, Toscani} when the spatial domain is $\mathbb{R}^n$. In a recent series of papers \cite{CDP, CDPmoment, CDPsmall}, T. Chen, Denlinger and Pavlović introduced a new approach based on the Wigner transform and techniques from nonlinear dispersive PDEs on the study of the quantum many-body hierarchy dynamics in order to establish well-posedness theory of the Boltzmann equation below the continuity threshold. In particular, in \cite{CDP}, the usual regularity requirement $s>\frac{d}{2}$ for well-posedness has been relaxed to $s>\frac{d-1}{2}$ for both Maxwellian molecules and hard potentials with cut-off. Later, X. Chen and Holmer \cite{Chen-Holmer} showed that the regularity $s=1$ is actually the  well/ill-posedness Sobolev regularity threshold for the Boltzmann equation in the 3D constant kernel case. In a subsequent paper \cite{Chen-Shen-Zhang}, the ill-posedness result in \cite{Chen-Holmer} has been extended to the case of 2D/3D kernel with soft potentials by proving that the well/ill-posedness separation point is also $\frac{d-1}{2}$. Besides, the well-posedness proved in \cite{Chen-Shen-Zhang} has completed the 2D/3D soft potential kernel case left from \cite{CDP}. 

As for the spatially periodic case, there have been further developments of the Boltzmann local/global existence theory under certain conditions on the collision kernel since the pioneering works of Grad \cite{GradL} (local existence theory) and Ukai \cite{UkaiS} (global existence theory). Next we shall review some of the results associated to the periodic spatial domain. With the initial data which are perturbations of Maxwellian equilibrium states $\mu(v)=e^{-|v|^2}$, the result in \cite{UkaiS} demonstrated the existence of a unique global solution 
\begin{align}\label{perturofeq}
 f=\mu(v)+\sqrt{\mu(v)}g   
\end{align}
to the Boltzmann equation with cut-off hard potential kernel for which ($g$ solves the reformulated Boltzmann equation based on Maxwellian distribution $\mu(v)$)
\begin{align*}
 g\in L_t^{\infty}([0,\infty);H_{l,\beta})\cap C_t^{\infty}([0,\infty);H_{l,\beta})\,\,\,\text{for}\,\,\,l>\frac{3}{2}, \beta> \frac{5}{2} 
\end{align*}
with the smallness assumption on the initial data $g_0=g|_{t=0}$, and where the Banach space $H_{l,\beta}$ is defined by means of the norm $\Vert g\Vert_{H_{l,\beta}}=\sup_{v\in\mathbb{R}^3}(1+|v|)^{\beta}\Vert g\Vert_{H^l_x(\mathbb{T}^3)}$. Note that such solutions as in \eqref{perturofeq} are regarded as linearization of the Boltzmann equation under consideration around the Maxwellian equilibrium state. We further note that the subsequent works in this direction mentioned in what follows also study solutions which are perturbations of Maxwellian equilibrium states as in \eqref{perturofeq}, whereas in the current paper we consider solutions not given by such linearization. Indeed, we follow the scheme of \cite{Chen-Holmer, Chen-Shen-Zhang} in constructing the solutions within the framework of a periodic Cauchy problem, in which the Boltzmann equation is addressed via harmonic analysis methods such as the Littlewood-Paley and the Fourier restriction space theories. For the periodic Boltzmann equation without angular cut-off, Gressman and Strain \cite{Gressman} first established the global-in-time existence theory by constructing the strong solution $f(t,x,v)$ in the form \eqref{perturofeq} that satisfies, for small initial data $g_0=g|_{t=0}$, $g\in L^{\infty}_tL^2_vH^2_x((0,\infty)\times \mathbb{T}^3_x\times \mathbb{R}^3_v)$ for hard potential case, while $g\in L^{\infty}_tH^4_{x,v}((0,\infty)\times \mathbb{T}^3_x\times \mathbb{R}^3_v)$ for soft potential case. In the setting that the standard perturbation $g$ as in \eqref{perturofeq} to $\mu(v)$ is defined, an energy method for the Boltzmann equation in a periodic box was first developed by Guo \cite{GuoY} to construct global classical solutions under the Grad angular cut-off assumption in the case of soft potentials, see also the work of Liu, Yang and Yu \cite{LiuYang} that introduced an energy method for the time-asymptotic, nonlinear stability of the global Maxwellian states in a spatially unbounded domain. Regarding the Boltzmann equation with cut-off hard potentials in a general bounded domain, Guo \cite{GuoY1} developed a unified $L^2-L^{\infty}$ theory to obtain the global existence of $L^{\infty}_{x,v}$ solutions. For a large amplitude periodic initial data, Duan, Huang, Wang and Yang \cite{DuanR} established the well-posedness theory of the Boltzmann equation by developing $L^{\infty}_xL^1_v\cap L^{\infty}_{x,v}$ method for both hard and soft potentials with angular cut-off. Later, motivated by \cite{GuoY1, DuanR}, the small global existence theory of the angular cut-off Boltzmann equation near a global Maxwellian in the space $L^r_vL^{\infty}_{t,x}$ for $1<r\leq \infty$ with certain weights was established by Nishimura \cite{Nishimura}. Lastly, in the more recent study \cite{DuanSakamoto}, Duan, Liu, Sakamoto and Strain considered the non-cut-off Boltzmann equation in a torus $\mathbb{T}^3$, in which they proved global-in-time existence of small amplitude solutions in the spaces $\ell^1_kL^{\infty}_TL^2_v$ with suitable exponential weights.

In order to make use of dispersive PDE techniques in constructing local solutions of the Boltzmann equation \eqref{BoltzmannIVP}, taking the inverse Fourier transform in the velocity variable $v$ only, and denoting $\tilde{f}(t,x,\xi)=\mathcal{F}_{v\mapsto\xi}^{-1}[f]$, we rewrite \eqref{BoltzmannIVP} as
\begin{equation}\label{hyperbolicschrdingerIVP}
    \begin{cases}
        i\partial_{t}\tilde{f}+\nabla_{\xi}\cdot \nabla_x\tilde{f}=i\mathcal{F}_{v\mapsto\xi}^{-1}[Q(f,f)],\\
        \tilde{f}(0,x,\xi)=\tilde{f}_{0}(x,\xi),
    \end{cases}
    \quad (t,x,\xi)\in[-T,T]\times\mathbb{T}^{d}\times \mathbb{R}^d.
\end{equation}
Let us denote $\tilde{Q}(f,g):=\mathcal{F}_{v\mapsto\xi}^{-1}[Q(f,f)]$. In this setting, by the Bobylev identity \cite{bobylev}, the loss and gain operators turn into
\begin{equation}\label{lossandgainop}
 \begin{aligned}
 &\tilde{Q}^+(\tilde{f},\tilde{g})(\xi):=\mathcal{F}_{v\mapsto\xi}^{-1}[Q^+(f,g)](\xi)= \int_{\mathbb{S}^{d-1}}\tilde{f}(\xi^+) \tilde{g}(\xi^-) \,d\omega\\&\tilde{Q}^-(\tilde{f},\tilde{g})(\xi):=\mathcal{F}_{v\mapsto\xi}^{-1}[Q^-(f,g)](\xi)=\tilde{f}(\xi)\tilde{g}(0)
\end{aligned}   
\end{equation}
where $\xi^+=\frac{1}{2}(\xi+|\xi|\omega)$ and $\xi^-=\frac{1}{2}(\xi-|\xi|\omega)$. The linear hyperbolic Schrödinger equation 
\begin{align*}
 i\partial_{t}\phi+\nabla_{\xi}\cdot \nabla_x\phi=0,\quad \phi|_{t=0}=\phi_{0},  
\end{align*}
for $(x,\xi)\in \mathbb{R}^d\times \mathbb{R}^d$, admits solutions satisfying the Strichartz estimates
\begin{align}\label{Strichartzforeuclidancase}
    \Vert e^{it\nabla_{\xi}\cdot \nabla_x}\phi_0\Vert_{L^q_{t}L^p_{x\xi}}\lesssim \Vert  \phi_0\Vert_{L^2_{x\xi}},\quad\frac{2}{q}+\frac{2d}{p}=d,\,\,q\geq 2,\,\,d\geq2.
\end{align}
The Strichartz estimates \eqref{Strichartzforeuclidancase} essentially follow from Theorem 1.2 in \cite{Keeltao}, see the appendix of \cite{Chen-Shen-Zhang, Chen-Shen-Zhang2} for a discussion. Also, it is is worth noting that the sharpness arguments in \cite{Chen-Holmer, Chen-Shen-Zhang, Chen-Shen-Zhang2} essentially follow from the estimates \eqref{Strichartzforeuclidancase}, which have no loss of derivatives. As there is no similar estimate obtained previously for the spatially periodic case, we will need to derive particular Strichartz estimate that works well in the nonlinear context for the IVP \eqref{hyperbolicschrdingerIVP}, yet with certain loss of derivatives due to the hyperbolic nature of our problem posed in a periodic box, see Section \ref{sectStrichartz}.
\subsection{Notation}
The Fourier transform of $g$ defined on $\mathbb{T}^d\times \mathbb{R}^d$ is given by
\begin{align*}
    \mathcal{F}_{x, \xi}[g](n,v)=\widehat{g}(n,v)=\int_{\mathbb{T}^d\times \mathbb{R}^d}g(x,\xi)e^{-i(n\cdot x+v\cdot\xi)}\,dx\,d\xi.
\end{align*}
The partial Fourier transforms are also defined accordingly. Let $x\in\mathbb{T}^d,\, \xi\in\mathbb{R}^d$ and $N,M$ be dyadic numbers. We denote the frequency localization operator of $x/\xi$ at frequency $N/M$ by $P_N^x/P^{\xi}_M$. For brevity, we sometimes write $P_{N,M}$ (or $P_N/P_M$) to denote $P_N^xP_M^{\xi}$ (or $P_N^x/P^{\xi}_M$) and use both of these notations interchangeably. Let $\varphi$ be a smooth function satisfying $\varphi(x)=1$ for all $|x|\leq 1$ and $\varphi(x)=0$ for $|x|\geq 2$. We then form the function $\varphi_M(v)=\varphi(\frac{v}{M})-\varphi(\frac{2v}{M})$, $v\in \mathbb{R}^d$, and set $P_M^{\xi}g(\xi)=\mathcal{F}^{-1}_v[\varphi_M(v)\widehat{g}(v)](\xi)$. For any set $S\subset \mathbb{R}^d$, we shall define $P_S^x$ (or $P_S$) as the multiplier operator $\widehat{P_S^xg}(n)=\mathds{1}_S\widehat{g}(n)$, where we denote $\mathds{1}_S$ to be the frequency projection on $S\cap \mathbb{Z}^d$. Thus, we define the Littlewood-Paley projector $P_N^x$ as follows
\begin{align*}
  \mathcal{F}_x[{P_N^xg}](n)= \mathcal{F}_x[P^x_{(-N,N]^d\setminus(-\frac{N}{2},\frac{N}{2}]^d}g](n)=\mathds{1}_{(-N,N]^d\setminus(-\frac{N}{2},\frac{N}{2}]^d}\widehat{g}(n),\quad n\in \mathbb{Z}^d. 
\end{align*}
Let $a+(-N,N]^d$ be a square of size $N$ in $\mathbb{Z}^d$ centered at $a\in \mathbb{Z}^d$. Then, with the above notation, the projection operator $P_{a+(-N,N]^d}$ is defined by $\mathcal{F}[P_{a+(-N,N]^d}g](n)=\mathds{1}_{a+(-N,N]^d}\widehat{g}(n)$.
We denote the hyperbolic Schr\"odinger propagator by the following
\begin{align}\label{hyperbolicschrödingergroup}
S(t)g(x,\xi):=e^{it\nabla_x\cdot\nabla_{\xi}}g(x,\xi)=\mathcal{F}^{-1}_{n,v}[e^{-itv\cdot n}\widehat{g}(n,v)](x,\xi).
\end{align}
We reserve the notation $\psi(t)$ for a smooth compactly supported function such that $\supp{\psi}\subset (-2,2)$ and $\psi\equiv 1$ on $[-1,1]$. Also, let $\psi_{T}(t)=\psi(\frac{t}{T})$ for $T>0$. Lastly, we denote $L^p_T:=L^p([-T,T])$ for $1\leq p \leq \infty$.
\subsection{Function Spaces}
We define the Sobolev space $H^{s,r}_{x\xi}(\mathbb{T}^d\times \mathbb{R}^d)=H^{s}_xH^{r}_{\xi}(\mathbb{T}^d\times \mathbb{R}^d)$ by the norm
\begin{align*}
    \Vert \tilde{f}\Vert_{H^{s,r}_{x\xi}}=\Vert \langle \nabla_x\rangle^s\langle \nabla_{\xi}\rangle^r\tilde{f}\Vert_{L^2_{x\xi}}=\Vert \langle \nabla_x\rangle^s\langle v\rangle^r f\Vert_{L^2_{xv}}=\Vert f\Vert_{L^{2,r}_vH^s_x}.
\end{align*}
Our analysis of Boltzmann equation relies very much on the Fourier restriction space theory which is introduced in the context of Schrödinger equation by Bourgain in  \cite{Bourgain1993}. The Fourier restriction space $X^{s,r,b}(\mathbb{R}\times \mathbb{T}^d\times \mathbb{R}^d)$ associated to the semigroup \eqref{hyperbolicschrödingergroup} is defined via the norm
\begin{align}\label{Fourierrestrictionspace}
 \Vert \tilde{f}\Vert_{X^{s,r,b}}=\Vert \langle \tau+n\cdot v\rangle^b\langle n\rangle^s\langle v\rangle^r\mathcal{F}_{t,x,\xi}[\tilde{f}]\Vert_{\ell_n^2L_{\tau,v}^2}=\Vert \langle \tau+n\cdot v\rangle^b\langle n\rangle^s\langle v\rangle^r\mathcal{F}_{t,x}[f]\Vert_{\ell_n^2L_{\tau,v}^2}.
 \end{align}
 For simplicity, we shall write $X^{s,b}:=X^{s,s,b}$. We also use the restricted $X^{s,r,b}$ space, $X^{s,r,b}_T$, which is the equivalence classes of functions that agree on $[-T,T]$ endowed with the norm
 \begin{align*}
  \Vert \tilde{f}\Vert_{X^{s,r,b}_T}=\inf_{F\arrowvert_{[-T,T]}=\tilde{f}}\Vert F\Vert_{X^{s,r,b}}. 
 \end{align*}
 For the notion of conditional uniqueness of a solution $f$ to \eqref{BoltzmannIVP} (which will be introduced in the next subsection), we shall define the space $Y^{s,r,b}$ associated with $X^{s,r,b}$ by the norm
 \begin{align}\label{Y^s,r,b}
     \Vert f\Vert_{Y^{s,r,b}}:=\Vert \langle \tau+n\cdot v\rangle^b\langle n\rangle^s\langle v\rangle^r\mathcal{F}_{t,x}[f]\Vert_{\ell_n^2L_{\tau,v}^2}.
 \end{align}
 We also define $Y^{s,b}$ and the time-restricted space $Y^{s,r,b}_T$ analogously as above. Note  that, in view of \eqref{Fourierrestrictionspace} and \eqref{Y^s,r,b}, we see that a solution $\tilde{f}\in X^{s,r,b}$ to \eqref{hyperbolicschrdingerIVP} corresponds to a solution $f\in Y^{s,r,b}$ to \eqref{BoltzmannIVP}. Note also that $Y^{s,r,b}_T$ embeds into $C\big([-T,T]; L^{2,r}_vH^s_x\big)$ for $b>\frac{1}{2}$. In the light of these, we introduce the notion of well-posedness and state our result for the Boltzmann equation in the following subsection.
 \subsection{Main Result}
\begin{Definition}
    We say that the initial value problem \eqref{BoltzmannIVP} is locally well-posed in $L^{2,r}_vH^s_x$, if for all $R>0$, there exists $T=T(R)$ such that the followings hold.
    \begin{itemize}
        \item[$(a)$] \emph{(Existence and uniqueness)} For each $f_0\in L^{2,r}_vH^s_x$ with $\Vert f_0\Vert_{L^{2,r}_vH^s_x}\leq R$, there exists a unique  solution $f(t,x,v)$ to the integral equation of \eqref{BoltzmannIVP} in $Y^{s,r,\frac{1}{2}+}_T\hookrightarrow C\big([-T,T]; L^{2,r}_vH^s_x\big)$ (equivalently, corresponding solution $\tilde{f}$ to \eqref{hyperbolicschrdingerIVP} is unique in $X^{s,r,\frac{1}{2}+}_T$). Furthermore, we have $f(t,x,v)\geq 0$ if $f_0\geq 0$.
        \item[$(b)$] The data-to-solution map $f_0\mapsto f$ is uniform continuous from $L^{2,r}_vH^s_x$ to $C\big([-T,T];L^{2,r}_vH^s_x\big)$. Indeed, if $f$ and $g$ are two solutions to \eqref{BoltzmannIVP} on $[-T,T]$, then for all $\epsilon>0$, there exists $\delta(\epsilon)$ independent of $f$ or $g$ such that
        \begin{align*}
            \Vert f(t)-g(t)\Vert_{C([-T,T];L^{2,r}_vH^s_x)}<\epsilon\,\,\text{provided that}\,\, \Vert f(0)-g(0)\Vert_{L^{2,r}_vH^s_x}<\delta(\epsilon).
        \end{align*}
    \end{itemize}
 \end{Definition}
 \begin{theorem}\label{maintheorem}
     Let $d\geq 2$. Then, the Cauchy problem \eqref{BoltzmannIVP} is locally well-posed in $L^{2,r}_vH^s_x$ for $s>\frac{d}{2}-\frac{1}{4}$ and $r>\frac{d}{2}$.
 \end{theorem}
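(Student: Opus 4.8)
The plan is to solve the Duhamel formulation of \eqref{hyperbolicschrdingerIVP},
\[
\tilde f(t)=\psi(t)\,S(t)\tilde f_0+\psi_T(t)\int_0^t S(t-t')\,\tilde Q\big(f(t'),f(t')\big)\,dt',
\]
by a contraction mapping argument in the restricted Bourgain space $X^{s,r,\frac12+}_T$, following the standard scheme for semilinear dispersive equations. Two ingredients are needed. First, the linear estimates: the homogeneous bound $\|\psi\,S(t)\tilde f_0\|_{X^{s,r,b}}\lesssim\|\tilde f_0\|_{H^{s,r}_{x\xi}}$ and the inhomogeneous bound $\|\psi_T\int_0^t S(t-t')F\,dt'\|_{X^{s,r,b}}\lesssim T^{\theta}\,\|F\|_{X^{s,r,b-1+\theta}}$ for $b=\frac12+$ and some small $\theta>0$; these are routine from the definition \eqref{Fourierrestrictionspace}, since $S(t)$ is the Fourier multiplier with real symbol $-n\cdot v$. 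Second, and crucially, the bilinear estimate
\[
\big\|\tilde Q(\tilde f,\tilde g)\big\|_{X^{s,r,-\frac12+}}\lesssim\|\tilde f\|_{X^{s,r,\frac12+}}\,\|\tilde g\|_{X^{s,r,\frac12+}},\qquad s>\tfrac d2-\tfrac14,\ \ r>\tfrac d2 .
\]
Granting these, $\Phi$ maps a small ball of $X^{s,r,\frac12+}_T$ into itself and is a contraction there for $T=T(R)$ small; the same bilinear estimate applied to $\tilde f-\tilde g$ yields uniqueness in $X^{s,r,\frac12+}_T$ and uniform continuity of the data-to-solution map, and the embedding $Y^{s,r,\frac12+}_T\hookrightarrow C([-T,T];L^{2,r}_vH^s_x)$ transfers all of this back to $f$.

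The entire difficulty is the bilinear estimate, and the loss term $\tilde Q^-$ is the easier part. Since $\tilde Q^-(\tilde f,\tilde g)(x,\xi)=\tilde f(x,\xi)\,\tilde g(x,0)$ and $\tilde g(x,0)=\int_{\mathbb R^d}g(x,v)\,dv$ is bounded in $H^s_x$ by $\|g\|_{L^{2,r}_vH^s_x}$ whenever $r>\tfrac d2$ (Cauchy--Schwarz in $v$ with the weight $\langle v\rangle^r$), this term reduces to a product estimate in the $x$ variable at regularity $s>\tfrac d2-\tfrac14$; it still loses a quarter derivative relative to an algebra and is closed by an $L^4_{t,x}$ Strichartz bound applied to $\tilde f$. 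For the gain term $\tilde Q^+(\tilde f,\tilde g)(\xi)=\int_{\mathbb S^{d-1}}\tilde f(\xi^+)\,\tilde g(\xi^-)\,d\omega$ one Littlewood--Paley decomposes $\tilde f$, $\tilde g$ and the output simultaneously in the $x$-frequencies ($N_1,N_2,N$) and the $\xi$-frequencies ($M_1,M_2,M$). The collision geometry is favourable here: from $\xi^{+}\perp\xi^{-}$ and $\xi^{+}+\xi^{-}=\xi$ one has $|\xi|^2=|\xi^+|^2+|\xi^-|^2$, which forces $M\sim\max(M_1,M_2)$, so there is no high-by-high-to-low interaction in the $\xi$ variable; in $x$ one has the usual constraint $n=n_1+n_2$ and must win over the high-by-low regime. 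One then dualizes, pairing the output against a test function, distributes the three factors into two $L^4_{t,x,\xi}$ norms and one $L^2_{t,x,\xi}$ norm by Hölder, and applies the $L^4$ Strichartz estimate of Section~\ref{sectStrichartz} together with the transfer principle that bounds $\|\cdot\|_{L^4_{t,x,\xi}}$ by $\|\cdot\|_{X^{0,0,\frac12+}}$ (with the loss of derivatives recorded there). That loss is precisely what fixes the thresholds: after summing the resulting geometric series in $N_1,N_2,N,M_1,M_2,M$, convergence holds exactly when $s>\tfrac d2-\tfrac14$ and $r>\tfrac d2$.

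The step I expect to be the main obstacle is the interaction of the spherical average $\int_{\mathbb S^{d-1}}\tilde f(\xi^+)\tilde g(\xi^-)\,d\omega$ with the modulation weights $\langle\tau+n\cdot v\rangle^{\pm(\frac12+)}$ inside the bilinear estimate. One must produce a resonance identity expressing the output modulation variable in terms of the two input modulation variables plus a remainder that is quantitatively large on the bulk of the $\omega$-sphere, so that at least one of the three factors can be forced into a high-modulation regime and estimated with a genuine derivative gain; the orthogonality of $\xi^{\pm}$ and the fact that $\omega\mapsto(\xi^+,\xi^-)$ sweeps a sphere of radius comparable to $|\xi|$ are the geometric inputs that make this possible, but the bookkeeping of the $\omega$-integration against the dyadic pieces --- and the verification that it costs no more than the budget allowed by $s>\tfrac d2-\tfrac14$, $r>\tfrac d2$ --- is the delicate point, and is exactly where the hyperbolic (rather than elliptic) nature of the propagator forces a loss absent in the Euclidean estimate \eqref{Strichartzforeuclidancase}. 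Finally, the propagation of positivity, $f_0\ge0\Rightarrow f\ge0$, is obtained separately by recasting the equation in its mild gain--loss form along the transport characteristics --- in which the loss term becomes an exponential damping and the gain term is manifestly nonnegative --- and running the fixed-point argument on the cone of nonnegative functions.
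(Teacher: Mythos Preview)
Your overall architecture --- contraction in $X^{s,r,\frac12+}_T$ via the linear bounds \eqref{b2}--\eqref{b4} and a bilinear estimate on $\tilde Q^{\pm}$ closed with the $L^4$ Strichartz estimate of Section~\ref{sectStrichartz} --- is the paper's, and your sketch of $\tilde Q^-$ is in the right spirit. But the ``main obstacle'' you anticipate is a red herring. The paper does \emph{not} put the output of the bilinear estimate in $X^{s,r,-\frac12+}$; it proves instead (Lemmas~\ref{Q^-} and~\ref{Q^+}) $\|\tilde Q^{\pm}(\tilde f,\tilde g)\|_{L^2_TH^s_xH^r_\xi}\lesssim T^{1/4}\|\tilde f\|_{X^{s,r,\frac12+}}\|\tilde g\|_{X^{s,r,\frac12+}}$ and only afterwards embeds into $X^{s,r,b-1}$ via \eqref{b4}. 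There is therefore no modulation weight on the output, no resonance identity to exhibit, and no need to force any factor into high modulation. The $X^{0,b}$ machinery is used only through the transfer principle to pass from Theorem~\ref{stricharrtzboltzmann} to $\|P_{N,M}u\|_{L^4}\lesssim(\cdot)^{1/4}\|P_{N,M}u\|_{X^{0,\frac12+}}$.

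The genuine gap in your plan is the gain term. You propose to ``distribute the three factors into two $L^4_{t,x,\xi}$ norms and one $L^2_{t,x,\xi}$ norm by H\"older,'' but $\tilde Q^+(\tilde f,\tilde g)(\xi)=\int_{\mathbb S^{d-1}}\tilde f(\xi^+)\tilde g(\xi^-)\,d\omega$ is not a pointwise product in $\xi$; the map $(\xi,\omega)\mapsto(\xi^+,\xi^-)$ has range on a hypersurface, so straight H\"older in $\xi$ is unavailable. The paper's workhorse here is Lemma~\ref{L_xiestimate}, the Alonso--Carneiro bound $\|\tilde Q^+(F,G)\|_{L^r_\xi}\lesssim\|F\|_{L^p_\xi}\|G\|_{L^q_\xi}$ for $\tfrac1p+\tfrac1q=\tfrac1r$, which absorbs the $\omega$-integration once and for all and reduces the $\xi$-analysis to a genuine product. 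After that, each dyadic block is handled by placing the high-frequency factor in $L^\infty_TL^2_{x\xi}$ and the low-frequency one in $L^2_TL^\infty_{x\xi}$, then Bernstein and the $L^4$ Strichartz; the high--high case in $x$ is done by almost-orthogonality into cubes of the smaller sidelength so that Corollary~\ref{stricharrtzboltzmanncor} applies with the favourable constant. Without Lemma~\ref{L_xiestimate} or an equivalent, your H\"older step does not go through. One small correction as well: your claim $M\sim\max(M_1,M_2)$ from $|\xi|^2=|\xi^+|^2+|\xi^-|^2$ mixes the $\xi$- and $v$-sides --- the projectors $P^\xi_M$ localize $|v|\sim M$ --- and the paper obtains only $M\lesssim\max(M_1,M_2)$ from $|v|^2\le|u^*|^2+|v^*|^2$ (see \eqref{supportcond}--\eqref{u^*orv^*}); high--high--to--low in the $\xi$-frequency is present and must be summed.
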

\begin{Remark}
Despite taking $\mathbf{b}\equiv1$ in our analysis to establish Theorem \ref{maintheorem}, we may also allow the following collision operator (Maxwellian molecules) in Theorem \ref{maintheorem} 
\begin{align*}
 Q(f,f)(t, x, v)=\int_{\mathbb{S}^{d-1}}\int_{\mathbb{R}^d}[f(t,x,u^*)f(t,x,v^*)-f(t,x,u)f(t,x,v)]\mathbf{b}\Big(\frac{v-u}{|v-u|}\cdot\omega\Big)\,dud\omega    
\end{align*}
under the assumption (which is known as Grad's cut-off assumption):
\begin{align*}
    \int_{\mathbb{S}^{d-1}}\mathbf{b}\Big(\frac{v-u}{|v-u|}\cdot\omega\Big )d\omega <\infty.
\end{align*}
Indeed, in this case, we would have
\begin{equation*}
 \begin{aligned}
 \tilde{Q}(\tilde{f},\tilde{f})(\xi)= \int_{\mathbb{S}^{d-1}}\tilde{f}(\xi^+) \tilde{f}(\xi^-)\mathbf{b}\Big(\frac{\xi}{|\xi|}\cdot\omega\Big )d\omega-\tilde{f}(\xi)\tilde{f}(0)\int_{\mathbb{S}^{d-1}}\mathbf{b}\Big(\frac{\xi}{|\xi|}\cdot\omega \Big)d\omega.
\end{aligned}   
\end{equation*}
Then, using theorem $1$ of \cite{ALONSO} in establishing a version of Lemma \ref{L_xiestimate} (with $\mathbf{b}$) together with the fact 
\begin{align*}
    \int_{\mathbb{S}^{d-1}}\mathbf{b}\Big(\frac{\xi}{|\xi|}\cdot\omega\Big )d\omega <\infty
\end{align*}
allows us to execute the same analysis to arrive at Theorem \ref{maintheorem}.   
\end{Remark}
 
\subsection{Organization of the paper} In Section \ref{sectStrichartz}, we prove the Strichartz estimate for the linear part of the equation \eqref{hyperbolicschrdingerIVP}, then in Section \ref{well-posedness}, we deal with the nonlinear estimates for loss and gain terms \eqref{lossandgainop} and prove the well-posedness assertion of Theorem \ref{maintheorem}.
\section{The Strichartz Estimate}\label{sectStrichartz}
In this section our goal is to prove the following Strichartz inequality. To begin with, we first set $\mathcal{M}=\mathbb{T}^d\times \mathbb {R}^d$ for $d\geq 1$. 
\begin{theorem}\label{stricharrtzboltzmann}
 Let $I\subset \mathbb{R}$ be an interval. Then, for all $M, N\geq 1$, we have
 \begin{align*}
     \Vert e^{it\nabla_{\xi}\cdot \nabla_x} P_N^xP_M^{\xi}\phi\Vert_{L^4(I\times \mathcal{M})}\leq C(I)\max\{M^d, (MN)^{d-1}\log N\}^{\frac{1}{4}} \Vert P_N^xP_M^{\xi}\phi\Vert_{L^2(\mathcal{M})}
 \end{align*}
 where $C(I)$ is a constant depending only on the measure of $I$.
\end{theorem}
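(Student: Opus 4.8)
The plan is to reduce the $L^4$ estimate to an $L^2$ orthogonality argument after squaring, and then to a lattice point count. By Plancherel (in $t$) and a $TT^*$-type unfolding, $\Vert e^{it\nabla_\xi\cdot\nabla_x}P_N^xP_M^\xi\phi\Vert_{L^4(I\times\mathcal M)}^2 = \Vert (e^{it\nabla_\xi\cdot\nabla_x}P_{N,M}\phi)^2\Vert_{L^2(I\times\mathcal M)}$, so I would expand the square on the Fourier side. Writing $u(t,x,\xi)=\sum_{n\in\mathbb Z^d}\int_{\mathbb R^d} e^{i(n\cdot x+v\cdot\xi-t\,n\cdot v)}\widehat{P_{N,M}\phi}(n,v)\,dv$, the product $u^2$ has space-time Fourier support governed by the resonance relation: for output frequency $(\tau,m,w)$ the contributing pairs $(n_1,v_1),(n_2,v_2)$ satisfy $n_1+n_2=m$, $v_1+v_2=w$, $n_1\cdot v_1+n_2\cdot v_2=-\tau$, with $|n_j|\sim N$, $|v_j|\sim M$. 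After a standard reduction cutting $I$ into $O(|I|)$ unit intervals and inserting $\psi_1(t)$, I can use the Hausdorff--Young / Cauchy--Schwarz mechanism to bound $\Vert u^2\Vert_{L^2}^2$ by $\sup_{(\tau,m,w)} \#\{\text{admissible } (n_1,v_1)\}$ times $\Vert P_{N,M}\phi\Vert_{L^2}^4$; the $v$-integration being continuous contributes the $M^d$-type volume factor, while the $n$-sum is the genuinely arithmetic piece.

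The heart of the matter is therefore: fix $m\in\mathbb Z^d$ with the two summands in $(-N,N]^d$, fix $w\in\mathbb R^d$ and $\tau\in\mathbb R$; bound the number of $n_1\in\mathbb Z^d$, $|n_1|\sim N$, for which there exists $v_1$ with $|v_1|\sim M$, $v_2=w-v_1$, and $n_1\cdot v_1+(m-n_1)\cdot(w-v_1)$ lying in a unit window around $-\tau$. For a fixed admissible $n_1$ the set of such $v_1$ is (a piece of) a hyperplane slab of thickness $O(1/|2n_1-m|)$ intersected with the ball of radius $\sim M$; so the real content is an $L^2$ bound on the counting function in $n_1$. I expect to handle this by viewing the constraint as: $n_1$ ranges over $\sim N^{d}$ lattice points, but for the product estimate one only needs that, after also localizing $v_1$ to unit cubes, each cube in $v_1$ and each value of $(m,w,\tau)$ is hit by a controlled number of $n_1$. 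A divisor-type bound — the quadratic form $n_1\cdot v_1 + (m-n_1)\cdot(w-v_1)$ being linear in $n_1$ once $v_1$ is frozen — gives that for each unit $v_1$-cube the admissible $n_1$ lie in a hyperplane slab, yielding $\sim (MN)^{d-1}$ after summing the $\sim M^d$ cubes against the thickness, and the $\log N$ is the price of summing the slab-thickness harmonic series $\sum_{|2n_1-m|\lesssim N} |2n_1-m|^{-1}$ over the dyadic scales of $|2n_1-m|$.

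The main obstacle is making the two regimes $M^d$ versus $(MN)^{d-1}\log N$ emerge from a single clean count rather than from an ad hoc case split: when $M\lesssim N$ the lattice in $n_1$ is the binding constraint and one gets $(MN)^{d-1}\log N$, whereas when $M\gtrsim N$ the continuous $v$-volume dominates and one gets $M^d$; I would organize this by first doing the $v_1$-integral for fixed $n_1$ (producing the thickness $\min\{M,\,M^{d-1}/|2n_1-m|\}$-type factor times $M^{d-1}$) and only then summing in $n_1$, so the $\max$ appears automatically. A secondary technical point is that $e^{it\nabla_\xi\cdot\nabla_x}$ is a free (non-elliptic) evolution with no curvature in the usual sense — there is genuine loss of derivatives — so one must not expect Keel--Tao-type gains; the argument above is purely a counting/orthogonality argument in the spirit of periodic Strichartz estimates à la Bourgain, adapted to the mixed torus-times-Euclidean phase space, and the $\log N$ loss is intrinsic. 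Once the pointwise count is in hand, assembling the final inequality with the constant $C(I)$ depending only on $|I|$ is routine.
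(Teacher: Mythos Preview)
Your plan is correct and is essentially the paper's argument: both pass to $\|u^2\|_{L^2}$, apply Cauchy--Schwarz to reduce to the measure of the resonance set $\{(n_1,v_1):|(2n_1-m)\cdot(v_1-\tfrac{w}{2})+c|\lesssim 1,\ |n_1|\lesssim N,\ |v_1|\lesssim M\}$, and bound it by the slab/harmonic-sum mechanism you sketch (the paper organizes this as a coordinate-wise reduction to a one-dimensional count, Lemma~\ref{measure}, which is where the $\log N$ actually appears). The only packaging difference is that the paper first records this as a modulation-localized bilinear $X^{0,b}$ bound (Theorem~\ref{L^2est}) and then transfers to the free flow via $\|\psi_\delta S(t)\phi\|_{X^{0,b}}=\|\psi_\delta\|_{H^b}\|\phi\|_{L^2}$, whereas you localize directly to unit time; the $X^{0,b}$ version is what is actually invoked in the nonlinear estimates later, so the detour is not wasted.
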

The proof of Theorem \ref{stricharrtzboltzmann} is based on an argument as in \cite{Takaoka-Tzvetkov}. As a consequence of Theorem \ref{stricharrtzboltzmann}, we obtain the following corollary:   
\begin{Corollary}\label{stricharrtzboltzmanncor}
 Let $I\subset \mathbb{R}$ be an interval. Then, for all $n_0\in\mathbb{Z}^d$ and all $M, N\geq 1$, we have
 \begin{align}\label{corollorytostrichartz}
     \Vert e^{it\nabla_{\xi}\cdot \nabla_x} P_{n_0+(-N,N]^d}^xP_M^{\xi}\phi\Vert_{L^4(I\times \mathcal{M})}\leq C(I)\max\{M^d, (MN)^{d-1}\log N\}^{\frac{1}{4}} \Vert P_{n_0+(-N,N]^d}^xP_M^{\xi}\phi\Vert_{L^2(\mathcal{M})}
 \end{align}
 where $C(I)$ is as in Theorem \ref{stricharrtzboltzmann}.
\end{Corollary}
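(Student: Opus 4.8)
The plan is to deduce the corollary from Theorem \ref{stricharrtzboltzmann} by a Galilean-type change of variables that converts the off-center frequency square $n_0+(-N,N]^d$ into the centered square $(-N,N]^d$, while leaving the shape of the $L^4$ bound untouched. First I would observe that the Fourier multiplier $P_{n_0+(-N,N]^d}^x$ is the conjugate of $P_{(-N,N]^d}^x$ by the modulation $e^{in_0\cdot x}$; that is, writing $\phi_{n_0}(x,\xi):=e^{-in_0\cdot x}\phi(x,\xi)$, one has $P_{n_0+(-N,N]^d}^x\phi = e^{in_0\cdot x}P_{(-N,N]^d}^x\phi_{n_0}$, and this $P_{(-N,N]^d}^x$ is exactly the Littlewood--Paley-type object $P_N^x$ appears — more precisely, since $(-N,N]^d$ is a dyadic box one has $P_{(-N,N]^d}^x=\sum_{N'\le N}P_{N'}^x$, so one applies Theorem \ref{stricharrtzboltzmann} to each dyadic piece and sums (the $\max\{M^d,(MN')^{d-1}\log N'\}^{1/4}$ factors are increasing in $N'$, so the sum is controlled by the top term up to an $O(\log N)$ loss, which can be absorbed into the already-present $\log N$, or kept as a harmless constant since we are on a fixed-length interval). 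Alternatively one simply notes the statement of Theorem \ref{stricharrtzboltzmann} as written already gives the bound for the single projector onto $(-N,N]^d\setminus(-N/2,N/2]^d$, and the same argument applied to the full box works verbatim.

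The key step is then to track how the propagator $e^{it\nabla_\xi\cdot\nabla_x}=S(t)$ interacts with the modulation $e^{in_0\cdot x}$. Using the Fourier-side formula \eqref{hyperbolicschrödingergroup}, $\widehat{S(t)g}(n,v)=e^{-itv\cdot n}\widehat{g}(n,v)$, so multiplying $g$ by $e^{in_0\cdot x}$ shifts $n\mapsto n-n_0$ in the symbol, giving $S(t)(e^{in_0\cdot x}h)(x,\xi)=e^{in_0\cdot x}\big(S(t)h\big)(x+tn_0\cdot(\text{shift in }\xi)\,,\,\ldots)$; concretely, the identity is
\begin{align*}
 S(t)\big(e^{in_0\cdot x}h\big)(x,\xi)=e^{in_0\cdot x}\,\big(S(t)h\big)(x,\xi+tn_0),
\end{align*}
which one verifies directly by inserting $\widehat{e^{in_0\cdot x}h}(n,v)=\widehat{h}(n-n_0,v)$ into \eqref{hyperbolicschrödingergroup} and relabeling $n\mapsto n+n_0$, the extra phase $e^{-itv\cdot n_0}$ being precisely the Fourier symbol of the $\xi$-translation by $tn_0$. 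I would then apply this with $h=P_{(-N,N]^d}^xP_M^\xi\phi_{n_0}$ (note the $\xi$-multiplier $P_M^\xi$ commutes with everything in sight, and the $x$-modulation does not touch it).

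Putting the pieces together: $e^{in_0\cdot x}$ is a unimodular factor so it drops out of every $L^4_{t,x,\xi}$ and $L^2_{x,\xi}$ norm; the $\xi$-translation $\xi\mapsto\xi+tn_0$ is measure-preserving on $\mathbb{R}^d$ for each fixed $t$, hence also preserves the space-time $L^4(I\times\mathcal{M})$ norm and the spatial $L^2(\mathcal{M})$ norm; and finally $P_M^\xi$ applied to a translate of a function frequency-localized at $M$ is still frequency-localized at $M$ (indeed $\xi$-translation does not alter $\xi$-Fourier support at all). Therefore
\begin{align*}
 \big\|S(t)P_{n_0+(-N,N]^d}^xP_M^\xi\phi\big\|_{L^4(I\times\mathcal{M})}
 =\big\|S(t)P_{(-N,N]^d}^xP_M^\xi\phi_{n_0}\big\|_{L^4(I\times\mathcal{M})},
\end{align*}
and Theorem \ref{stricharrtzboltzmann} (summed over dyadic $x$-pieces) bounds the right side by $C(I)\max\{M^d,(MN)^{d-1}\log N\}^{1/4}\|P_{(-N,N]^d}^xP_M^\xi\phi_{n_0}\|_{L^2(\mathcal{M})}=C(I)\max\{M^d,(MN)^{d-1}\log N\}^{1/4}\|P_{n_0+(-N,N]^d}^xP_M^\xi\phi\|_{L^2(\mathcal{M})}$, which is \eqref{corollorytostrichartz}. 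I do not expect any serious obstacle here — the only point requiring a little care is the book-keeping in passing from the single annular projector $P_N^x$ of Theorem \ref{stricharrtzboltzmann} to the full cube $(-N,N]^d$, and checking that the logarithmic and $C(I)$ constants survive the dyadic sum; since the bound $\max\{M^d,(MN')^{d-1}\log N'\}^{1/4}$ is monotone in $N'$ this costs at most a factor $\log N$, harmlessly absorbed.
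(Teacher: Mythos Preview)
Your proposal is correct and follows essentially the same approach as the paper: conjugate the off-center projector by the modulation $e^{in_0\cdot x}$, use the Galilean-type identity to convert the extra phase into a $\xi$-translation (the paper obtains $\xi-tn_0$ rather than your $\xi+tn_0$, but this sign slip is immaterial since the translation is measure-preserving), and then invoke Theorem~\ref{stricharrtzboltzmann}. Note that your ``alternative'' remark---that the proof of Theorem~\ref{stricharrtzboltzmann} applies verbatim to the full cube $(-N,N]^d$ (Lemma~\ref{measure} only uses $|n|\lesssim N$)---is exactly what the paper does, so your dyadic-summation detour with its spurious extra $\log N$ is unnecessary.
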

\begin{proof}
To shift the center of the frequency localization, we write
\begin{align*}
    x\cdot n-tv\cdot n=x\cdot n_0-tv\cdot n_0+x\cdot (n-n_0)-tv\cdot (n-n_0)
    \end{align*}
  and get
    \begin{equation}\label{e^itexpress}
  \begin{aligned}
   e^{it\nabla_{\xi}\cdot \nabla_x} &P_{n_0+(-N,N]^d}^xP_M^{\xi}\phi\,(t,x,\xi)\\&= \int_{\mathbb{R}^d} e^{i\xi\cdot v}\varphi_M(v) \sum_{n\in n_0+(-N,N]^d}e^{i(x\cdot n-tv\cdot n)}\widehat{\phi}(n,v)\,dv\\&=e^{ix\cdot n_0} \int_{\mathbb{R}^d} e^{i(\xi-tn_0)\cdot v}\varphi_M(v) \sum_{n\in n_0+(-N,N]^d}e^{i(x\cdot (n-n_0)-tv\cdot (n-n_0))}\widehat{\phi}(n,v)\,dv \\&=e^{ix\cdot n_0} \int_{\mathbb{R}^d} e^{i(\xi-tn_0)\cdot v}\varphi_M(v) \sum_{n\in (-N,N]^d}e^{i(x\cdot n-tv\cdot n)}\widehat{\phi}_0(n,v)\,dv\\&= e^{ix\cdot n_0}e^{it\nabla_{\xi}\cdot \nabla_x} P_{(-N,N]^d}^xP_M^{\xi}\phi_0\,(t,x,\xi-tn_0)
    \end{aligned}      
    \end{equation}
 where $\phi_0(x,\xi):=e^{-ix\cdot n_0}\phi(x,\xi)$. Likewise, we have
 \begin{align}\label{phiphi_0}
   P_{n_0+(-N,N]^d}^xP_M^{\xi}\phi\,(x,\xi)=e^{ix\cdot n_0}P_{(-N,N]^d}^xP_M^{\xi}\phi_0\,(x,\xi).  
 \end{align}
 Therefore, \eqref{corollorytostrichartz} follows from  \eqref{e^itexpress}, \eqref{phiphi_0}, and Theorem \ref{stricharrtzboltzmann}.
 
\end{proof}
Let $Q_K$ be the projection onto the frequencies $(\tau,n,v)$ such that $\langle \tau+v\cdot n\rangle\sim K$. To prove the Theorem \ref{stricharrtzboltzmann}, we begin by demonstrating the following modulation localised Strichartz inequality:
\begin{theorem}\label{L^2est}
    Let $u_1$ and $u_2$ be two functions defined on $\mathbb{R}\times\mathcal{M}$. Then, we have
 \begin{align*}
     \Vert Q_{K_1}P_N^xP_M^{\xi}u_1\,Q_{K_2}P_N^x&P_M^{\xi}u_2\Vert_{L^2(\mathbb{R}\times\mathcal{M})}\\&\lesssim K_1^{\frac{1}{2}}K_2^{\frac{1}{2}}\max\{M^d, (MN)^{d-1}\log N\} ^{\frac{1}{2}}\Vert P_N^xP_M^{\xi}u_1\Vert_{L^2(\mathbb{R}\times\mathcal{M})}\Vert P_N^xP_M^{\xi}u_2\Vert_{L^2(\mathbb{R}\times\mathcal{M})}.
 \end{align*}
 \end{theorem}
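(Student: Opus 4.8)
The plan is to prove Theorem~\ref{L^2est} by a bilinear Fourier-side argument of $TT^{*}$ type that reduces the estimate to a uniform count of the resonant frequency interactions, in the spirit of \cite{Takaoka-Tzvetkov} adapted to the phase $\tau+n\cdot v$ of the hyperbolic propagator $S(t)$. Write $w_j:=Q_{K_j}P_N^xP_M^{\xi}u_j$, so that $\widehat{w_j}(\tau,n,v)$ is supported where $|n|\sim N$, $|v|\sim M$ and $\langle\tau+n\cdot v\rangle\sim K_j$. By Plancherel in $(t,x,\xi)$ and the convolution formula for the space-time Fourier transform of a product,
\begin{align*}
\Vert w_1w_2\Vert_{L^2(\mathbb{R}\times\mathcal{M})}^2=\sum_{n\in\mathbb{Z}^d}\int_{\mathbb{R}}\int_{\mathbb{R}^d}\Big|\sum_{n_1\in\mathbb{Z}^d}\int_{\mathbb{R}}\int_{\mathbb{R}^d}\widehat{w_1}(\tau_1,n_1,v_1)\,\widehat{w_2}(\tau-\tau_1,n-n_1,v-v_1)\,dv_1\,d\tau_1\Big|^2\,dv\,d\tau.
\end{align*}
For fixed $(\tau,n,v)$ let $A(\tau,n,v)\subset\mathbb{Z}^d\times\mathbb{R}^d\times\mathbb{R}$ be the set of $(n_1,v_1,\tau_1)$ for which both factors in the inner integral may be nonzero, i.e. $|n_1|\sim|n-n_1|\sim N$, $|v_1|\sim|v-v_1|\sim M$, $\langle\tau_1+n_1\cdot v_1\rangle\sim K_1$ and $\langle(\tau-\tau_1)+(n-n_1)\cdot(v-v_1)\rangle\sim K_2$; let $|A(\tau,n,v)|$ denote its measure with respect to the product of counting measure in $n_1$ and Lebesgue measure in $(v_1,\tau_1)$. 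Applying Cauchy--Schwarz in $(n_1,v_1,\tau_1)$ to the inner integral, and then Fubini together with the translation invariance of $\sum_n\int\int dv\,d\tau$, we obtain
\begin{align*}
\Vert w_1w_2\Vert_{L^2(\mathbb{R}\times\mathcal{M})}^2\le\Big(\sup_{\tau,n,v}|A(\tau,n,v)|\Big)\,\Vert w_1\Vert_{L^2(\mathbb{R}\times\mathcal{M})}^2\,\Vert w_2\Vert_{L^2(\mathbb{R}\times\mathcal{M})}^2.
\end{align*}
Hence the theorem is reduced to the uniform bound $\sup_{\tau,n,v}|A(\tau,n,v)|\lesssim K_1K_2\,\max\{M^d,(MN)^{d-1}\log N\}$.

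To estimate $|A(\tau,n,v)|$ I would integrate in $\tau_1$ first. For each fixed $(n_1,v_1)$ the two modulation conditions force $\tau_1$ into the intersection of an interval of length $\lesssim K_1$ with an interval of length $\lesssim K_2$, so the $\tau_1$-integral is $\lesssim\min\{K_1,K_2\}$, and moreover this intersection is nonempty only if the resonance relation
\begin{align*}
\big|\,\tau+v_1\cdot n_1+(v-v_1)\cdot(n-n_1)\,\big|\lesssim K_1+K_2
\end{align*}
holds. Writing $v_1\cdot n_1+(v-v_1)\cdot(n-n_1)=(2n_1-n)\cdot v_1-n_1\cdot v+n\cdot v$, for each fixed $n_1$ this relation restricts $v_1$ to a slab of width $\lesssim(K_1+K_2)/|2n_1-n|$ perpendicular to $2n_1-n$, whose intersection with $\{|v_1|\sim M\}$ has measure $\lesssim M^{d-1}\min\{M,(K_1+K_2)/|2n_1-n|\}$ (when $|2n_1-n|\lesssim 1$ the relation is a condition on $n_1$ alone and contributes at most $M^d$). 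Summing over the lattice points $n_1$ with $|n_1|\sim|n-n_1|\sim N$, grouped according to the dyadic size $\rho$ of $|2n_1-n|=|n_1-n_2|\lesssim N$ and using that there are $\lesssim\rho^d$ such points for each dyadic $\rho\gtrsim 1$, one is led to
\begin{align*}
|A(\tau,n,v)|\lesssim\min\{K_1,K_2\}\Big(M^d+\sum_{1\le\rho\lesssim N}\rho^{d}\,M^{d-1}\min\{M,(K_1+K_2)/\rho\}\Big),
\end{align*}
the last sum being over dyadic $\rho$. Splitting into the cases $K_1+K_2\lesssim MN$ and $K_1+K_2\gtrsim MN$, summing the (essentially geometric) series in $\rho$, and using $\min\{K_1,K_2\}(K_1+K_2)\sim K_1K_2$, one obtains in both cases a bound by $K_1K_2\max\{M^d,(MN)^{d-1}\log N\}$, which is the required estimate.

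The main obstacle is exactly this last counting step: evaluating $|A(\tau,n,v)|$ sharply and uniformly in $(\tau,n,v)$ so that precisely the factor $\max\{M^d,(MN)^{d-1}\log N\}$, and no worse, comes out. The delicate points are the geometry of the intersection of the resonance slab with the velocity annulus $\{|v_1|\sim M\}$ (how the slab width is governed by $|2n_1-n|$), the lattice-point count on the dyadic shells $|2n_1-n|\sim\rho$ together with the degenerate regime $|2n_1-n|\lesssim 1$, and the dyadic summation over the resonance scale $\rho$, from which the logarithmic loss in $N$ is produced; one must also check the bookkeeping ensuring that the $\tau_1$-integration contributes only $\min\{K_1,K_2\}$ while the residual summation in $n_1$ supplies the remaining factor $\sim K_1+K_2$, so that the product matches the claimed $K_1K_2$ rather than $(K_1K_2)^{1/2}$. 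Once Theorem~\ref{L^2est} is available, Theorem~\ref{stricharrtzboltzmann} follows from it by an almost-orthogonality argument that decomposes the time-truncated free evolution $e^{it\nabla_{\xi}\cdot\nabla_x}P_N^xP_M^{\xi}\phi$ into modulation pieces $Q_{K_j}$ and sums the resulting bilinear bounds over the dyadic modulations, exactly as in \cite{Takaoka-Tzvetkov}.
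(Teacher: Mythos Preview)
Your reduction is exactly the paper's: Cauchy--Schwarz on the convolution, then integrating out $\tau_1$ to pick up $\min\{K_1,K_2\}$ and land on the resonance set $B_{\tau,n,v}=\{(n_1,v_1):|\tau+v_1\cdot n_1+(v-v_1)\cdot(n-n_1)|\lesssim K_1+K_2,\ |v_1|\lesssim M,\ |n_1|\lesssim N\}$. Where you diverge from the paper is in estimating $|B_{\tau,n,v}|$. The paper rewrites the resonance function as $2(v_1-\tfrac{v}{2})\cdot(n_1-\tfrac{n}{2})+\text{const}$ and proves a separate counting lemma (Lemma~\ref{measure}) by a \emph{coordinate-by-coordinate} reduction: freezing $d-1$ coordinates of $(n_1,v_1)$ costs $(MN)^{d-1}$, and the remaining one-dimensional problem $C_0\le v^{(1)}(n^{(1)}-b^{(1)})\le C_0+1$ is handled by the harmonic sum $\sum_{|n^{(1)}|\lesssim N}|n^{(1)}-b^{(1)}|^{-1}\lesssim\log N$, which is where the logarithm enters. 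Your argument instead fixes $n_1$, reads the resonance as a slab condition $v_1\cdot(2n_1-n)\in I$ of width $\lesssim (K_1+K_2)/|2n_1-n|$, and then sums over dyadic shells $|2n_1-n|\sim\rho$. This is a legitimate alternative; in fact for $d\ge2$ your dyadic sum $\sum_{\rho\lesssim N}\rho^{d-1}$ is genuinely geometric and yields $K(MN)^{d-1}$ \emph{without} the $\log N$, which is slightly stronger than the stated bound (the logarithm in your scheme only appears when $d=1$). Either route proves the theorem; the paper's coordinate-wise lemma is cleaner to state and reuse, while your slab decomposition is more geometric and, incidentally, shows the $\log N$ is not needed in the physically relevant range $d\ge2$.
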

\begin{proof}
   For simplicity, let us assume that $u_j$ are localized functions with $Q_{K_j}P_N^xP_M^{\xi}u_j=u_j$, $j=1,2$. Then, by the Cauchy-Schwarz inequality
   \begin{align*}
       \Vert u_1u_2\Vert_{L^2}^2&=\int_{\mathbb{R}}\int_{\mathbb{R}^d}\sum_{n\in\mathbb{Z}^d}\Bigg|\int_{\mathbb{R}}\int_{\mathbb{R}^d}\sum_{n_1\in\mathbb{Z}^d}\widehat{u}_1(\tau_1,n_1,v_1)\widehat{u}_2(\tau-\tau_1,n-n_1,v-v_1)\,dv_1\,d\tau_1\Bigg|^2\,dv\,d\tau\\&\lesssim \sup_{\tau,n,v} |A_{\tau,n,v}|\Vert u_1\Vert_{L^2}^2\Vert u_2\Vert_{L^2}^2
   \end{align*}
   where 
\begin{align*}
    A_{\tau,n,v}=\{(\tau_1,n_1,v_1): |\tau_1+v_1\cdot n_1|\sim K_1, |\tau-\tau_1+(v-v_1)\cdot(n-n_1)|\sim K_2,\hspace{1cm}\\ |v_1|+|v-v_1|\lesssim M, |n_1|+|n-n_1|\lesssim N\}.
\end{align*}
For fixed $(n_1,v_1)$, the range of $\tau_1$ is $\min\{K_1,K_2\}$. Therefore, by the triangle inequality, we obtain
\begin{align}\label{A_tau}
 |A_{\tau,n,v}| \lesssim \min\{K_1,K_2\} |B_{\tau,n,v}| 
\end{align}
where
\begin{equation*}
 \begin{aligned}
B_{\tau,n,v}&=\{(n_1,v_1): |\tau+v_1\cdot n_1+(v-v_1)\cdot (n-n_1)|\lesssim K_1+K_2,\\& \hspace{5cm}|v_1|+|v-v_1|\lesssim M, |n_1|+|n-n_1|\lesssim N\}\\&=\{(n_1,v_1): |\tau+\frac{v\cdot n}{2}+2(v_1-\frac{v}{2})\cdot (n_1-\frac{n}{2})|\lesssim K_1+K_2,\\& \hspace{5cm}|v_1|+|v-v_1|\lesssim M, |n_1|+|n-n_1|\lesssim N\}.
\end{aligned}   
\end{equation*}
To finish the proof, we rely on following lemma:
\begin{lemma}\label{measure}
    Let $C_0\geq 0$\footnote{Here the value of $C_0$ may change from line to line in the proof, but for simplicity, we assume that $C_0\geq 0$ throughout. When $C_0 < 0$, similar argument also works.}, $M, N,K\geq 1$ be constants. Then,
    \begin{equation*}
    \begin{aligned}
        \sup_{\substack{|a|\lesssim M,|b|\lesssim N\\C_0\geq 0}}|\{(n,v)\in \mathbb{Z}^d\times \mathbb{R}^d: C_0\leq (v-a)\cdot (n-b)\leq C_0+K,\, &|v|\lesssim M, |n|\lesssim N\}|\\&\lesssim K\max\{M^d, (MN)^{d-1}\log N\}.
    \end{aligned}
    \end{equation*}
    where $|\cdot|$ denotes the product measure of the $d$-dimensional Lebesgue and counting measure. The implicit constant is independent of $C_0$.
\end{lemma}
\begin{proof}
 We may assume $K=1$, since for any $K\in \mathbb{N}$, we may write
 \begin{align*}
     \{&(n,v)\in \mathbb{Z}^d\times \mathbb{R}^d: C_0\leq (v-a)\cdot (n-b)\leq C_0+K,\, |v|\lesssim M, |n|\lesssim N\}\\&=\bigcup_{k=1}^K\{(n,v)\in \mathbb{Z}^d\times \mathbb{R}^d: C_0+k-1\leq (v-a)\cdot (n-b)\leq C_0+k,\, |v|\lesssim M, |n|\lesssim N\}.
 \end{align*}
 When $n=b$, we have the crude bound
 \begin{align*}
 \sup_{\substack{|a|\lesssim M,|b|\lesssim N\\C_0\geq 0}}|\{(n,v)\in \mathbb{Z}^d\times \mathbb{R}^d: C_0\leq (v-a)\cdot (n-b)\leq C_0+1,\, &|v|\lesssim M, |n|\lesssim N\}|\lesssim M^d.    
 \end{align*}
 Otherwise, we define 
 \begin{align*}
 V(b,M,N):=|\{(n,v)\in \mathbb{Z}^d\times \mathbb{R}^d: n\neq b,\, C_0\leq (v-a)\cdot (n-b)\leq C_0+1,\, &|v|\lesssim M, |n|\lesssim N\}|.    
 \end{align*}
 To estimate $V(b,M,N)$, let us express the variables coordinate-wise, that is, we shall write, e.g., $v=(v^{(1)}, v^{(2)},\cdots,v^{(d)})$, and express $n, a, b$ similarly as well. Assume that $n^{(1)}\neq b^{(1)}$. Fixing all coordinates except the first one and then using translation invariance to restrict $a^{(1)}=0$ and $b^{(1)}\in [0,1)$, we are able to obtain the following estimate
 \begin{align*}
   V(b,M,N)\lesssim (MN)^{d-1}V^{(1)}(b^{(1)},N),  
 \end{align*}
 where
 \begin{align*}
  V^{(1)}(b^{(1)},N):= |\{(n^{(1)},v^{(1)})\in \mathbb{Z}\times \mathbb{R}: n^{(1)}\neq b^{(1)},\, C_0\leq v^{(1)}(n^{(1)}-b^{(1)})\leq C_0+1,\, |n^{(1)}|\lesssim N\}|. 
 \end{align*}
 Therefore, it suffices to show that
 \begin{align*}
     \sup_{b\in[0,1), C_0\geq 0}V^{(1)}(b^{(1)},N)\lesssim \log N.
 \end{align*}
 To see this, we define
 \begin{align*}
    h(x)&=|\{(n^{(1)},v^{(1)})\in \mathbb{Z}\times \mathbb{R}: n^{(1)}\neq b^{(1)},\, 0\leq v^{(1)}(n^{(1)}-b^{(1)})\leq x,\, |n^{(1)}|\lesssim N\}| \\&=2\sum_{n^{(1)}=-\lfloor N\rfloor}^{\lfloor N\rfloor}\frac{x}{|n^{(1)}-b^{(1)}|}.
 \end{align*}
 Then, we get
 \begin{align*}
     V^{(1)}(b^{(1)},N)=h(C_0+1)-h(C_0)=2\sum_{n^{(1)}=-\lfloor N\rfloor}^{\lfloor N\rfloor}\frac{1}{|n^{(1)}-b^{(1)}|}\lesssim \log N.
 \end{align*}
\end{proof}
Therefore, by the Lemma \ref{measure}, we obtain
\begin{align*}
    |B_{\tau,n,v}|\lesssim (K_1+K_2)\max\{M^d, (MN)^{d-1}\log N\}\lesssim \max\{K_1,K_2\}\max\{M^d, (MN)^{d-1}\log N\}
\end{align*}
so that from \eqref{A_tau} we get
\begin{align*}
    |A_{\tau,n,v}|\lesssim K_1K_2\max\{M^d, (MN)^{d-1}\log N\}
\end{align*}
which implies the result.
\end{proof}
Theorem \ref{L^2est} leads to the following estimate.
\begin{Corollary}\label{L^4est}
For $b>\frac{1}{2}$, we have
\begin{align*}
    \Vert P_N^xP_M^{\xi}u\Vert_{L^4(\mathbb{R}\times \mathcal{M})}\lesssim \max\{M^d, (MN)^{d-1}\log N\}^{\frac{1}{4}} \Vert P_N^xP_M^{\xi}u\Vert_{X^{0,b}(\mathbb{R}\times \mathcal{M})}.
\end{align*}
\end{Corollary}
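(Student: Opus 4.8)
The plan is to run the classical argument that turns the bilinear $L^2$ bound of Theorem \ref{L^2est} into a linear $L^4$ bound, at the price of an $\varepsilon$ of modulation regularity. Throughout I abbreviate $\mathcal{C} := \max\{M^d,(MN)^{d-1}\log N\}$, and since the statement only involves $P_N^xP_M^{\xi}u$, I assume from the start that $u=P_N^xP_M^{\xi}u$.

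First I would record the square-function form of the $X^{0,b}$ norm: by \eqref{Fourierrestrictionspace} and Plancherel, decomposing the modulation weight $\langle\tau+n\cdot v\rangle$ dyadically gives
\begin{equation*}
\|u\|_{X^{0,b}(\mathbb{R}\times\mathcal{M})}^2\sim\sum_{K}K^{2b}\,\|Q_Ku\|_{L^2(\mathbb{R}\times\mathcal{M})}^2,
\end{equation*}
the sum running over dyadic $K\geq 1$ (recall $\langle\tau+n\cdot v\rangle\geq 1$). Writing $\|Q_Ku\|_{L^2}=K^{-b}a_K$, this reads $\big(\sum_K a_K^2\big)^{1/2}\sim\|u\|_{X^{0,b}}$.

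Next I would expand the fourth power as a bilinear $L^2$ expression and insert the modulation decomposition. Since $P_N^x,P_M^{\xi},Q_{K_j}$ commute and are idempotent, each piece $Q_{K_j}u$ satisfies $Q_{K_j}u=P_N^xP_M^{\xi}Q_{K_j}u$, so Theorem \ref{L^2est} applies with $u_1=Q_{K_1}u$, $u_2=Q_{K_2}u$ and yields
\begin{align*}
\|u\|_{L^4(\mathbb{R}\times\mathcal{M})}^2=\big\|u^2\big\|_{L^2}&\leq\sum_{K_1,K_2}\big\|Q_{K_1}u\,Q_{K_2}u\big\|_{L^2}\\
&\lesssim\mathcal{C}^{1/2}\sum_{K_1,K_2}K_1^{1/2}K_2^{1/2}\,\|Q_{K_1}u\|_{L^2}\|Q_{K_2}u\|_{L^2}=\mathcal{C}^{1/2}\Big(\sum_KK^{\frac12-b}a_K\Big)^2.
\end{align*}
By Cauchy--Schwarz the inner sum is at most $\big(\sum_KK^{1-2b}\big)^{1/2}\big(\sum_Ka_K^2\big)^{1/2}$, and $\sum_{K\ \mathrm{dyadic},\,K\geq 1}K^{1-2b}<\infty$ precisely because $b>\tfrac12$. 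Hence $\|u\|_{L^4}^2\lesssim\mathcal{C}^{1/2}\|u\|_{X^{0,b}}^2$, which is the assertion after taking square roots.

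The only substantive input is Theorem \ref{L^2est}; everything else is the standard bookkeeping of the dyadic modulation sum, and the hypothesis $b>\tfrac12$ enters exactly to make $\sum_KK^{1-2b}$ converge. I would also note that this summation costs nothing in $M$ or $N$, so the final constant $\mathcal{C}^{1/4}$ is just the square root of the bilinear constant of Theorem \ref{L^2est}; if one wanted a quantitative dependence on the length of a time interval one could instead start from a time-localized version, but for the statement as written no such care is needed.
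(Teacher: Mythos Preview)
Your proof is correct and follows essentially the same approach as the paper: both reduce the $L^4$ bound to the bilinear $L^2$ estimate of Theorem \ref{L^2est} via the dyadic modulation decomposition $u=\sum_K Q_K u$, and then sum the resulting $K_1^{1/2-b}K_2^{1/2-b}$ factors using $b>\tfrac12$. The only cosmetic difference is that the paper bounds $K^b\|Q_Ku\|_{L^2}\lesssim\|u\|_{X^{0,b}}$ directly and sums the geometric series, whereas you repackage this as Cauchy--Schwarz against the square-function form of the $X^{0,b}$ norm; the two are equivalent.
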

\begin{proof}
    For simplicity we shall write $P_N^xP_M^{\xi}u=u$. Then, by the Theorem \ref{L^2est}, we obtain
    \begin{align*}
        \Vert u\Vert_{L^4}^2=\Vert \sum_{K_1}Q_{K_1}u\sum_{K_2}Q_{K_2}u\Vert_{L^2}&\lesssim \sum_{K_1,K_2}\Vert Q_{K_1}u\,Q_{K_2}u\Vert_{L^2} \\&\lesssim \max\{M^d, (MN)^{d-1}\log N\}^{\frac{1}{2}}\sum_{K_1,K_2}K_1^{\frac{1}{2}-b}K_2^{\frac{1}{2}-b}\Vert u\Vert_{X^{0,b}}^2\\&\lesssim \max\{M^d, (MN)^{d-1}\log N\}^{\frac{1}{2}} \Vert u\Vert_{X^{0,b}}^2.
    \end{align*}
\end{proof}
Finally, we are ready to prove the Theorem \ref{stricharrtzboltzmann}.
\begin{proof}[Proof of Theorem \ref{stricharrtzboltzmann}]
  Let us write $P_M^{\xi}P_N^x\phi=\phi$ as before, and note that for $\delta>0$ we have
  \begin{equation}\label{psidelta}
   \begin{aligned}
      \Vert \psi_{\delta}\Vert_{H^b}\sim \Vert \psi_{\delta}\Vert_{L^2}+\Vert \psi_{\delta}\Vert_{\dot{H}^b}=\delta^{\frac{1}{2}}\Vert \psi\Vert_{L^2}+\delta^{\frac{1}{2}-b}\Vert \psi\Vert_{\dot{H}^b}\sim \delta^{\frac{1}{2}}+\delta^{\frac{1}{2}-b}.
  \end{aligned}   
  \end{equation}
 Therefore, from Corollary \ref{L^4est} and \eqref{psidelta}, we obtain
\begin{equation}\label{estimL^4delta}
 \begin{aligned}
    \Vert S(t)\phi\Vert_{L^4(|t|\leq \delta,(\xi,x)\in\mathcal{M})}&\sim \Vert \psi_{\delta}(t)S(t)\phi\Vert_{L^4(\mathbb{R}\times\mathcal{M})}\\&\lesssim \max\{M^d, (MN)^{d-1}\log N\}^{\frac{1}{4}}\Vert \psi_{\delta}(t)S(t)\phi\Vert_{X^{0,b}}\\&=\max\{M^d, (MN)^{d-1}\log N\}^{\frac{1}{4}}\Vert \psi_{\delta}\Vert_{H^b}\Vert \phi\Vert_{L^2_{x\xi}}\\&\lesssim \max\{M^d, (MN)^{d-1}\log N\}^{\frac{1}{4}}(\delta^{\frac{1}{2}}+\delta^{\frac{1}{2}-b})\Vert \phi\Vert_{L^2_{x\xi}},
\end{aligned}   
\end{equation}
where we have used the identity $\Vert f\Vert_{X^{0,b}}=\Vert S(-t)f\Vert_{H^b_tL^2_{x\xi}}$ in \eqref{estimL^4delta}. In the case of an arbitrary time interval $I=[\alpha,\beta]\subset \mathbb{R}$, we define $\Phi:=S(\frac{\alpha+\beta}{2})\phi$ so as to set 
\begin{align*}
 S(t)\phi=S(t-\frac{\alpha+\beta}{2})\Phi.  
\end{align*}
Since for $t\in I$, we have $-\frac{\beta-\alpha}{2}\leq t-\frac{\alpha+\beta}{2}\leq \frac{\beta-\alpha}{2}$ and $\norm{\Phi}_{L^2}=\norm{\phi}_{L^2}$, we proceed similarly as in \eqref{estimL^4delta} with $\delta=\frac{\beta-\alpha}{2}$ to obtain the desired estimate.  
\end{proof}
\begin{Remark}
    Using Theorem \ref{L^2est} we obtain
    \begin{align*}
     \Vert P_N^xP_M^{\xi}\tilde{f}\Vert_{L^4(I\times \mathcal{M})}\lesssim \max\{M^d, (MN)^{d-1}\log N\}^{\frac{1}{4}} \Vert P_N^xP_M^{\xi}\tilde{f}\Vert_{X^{0,b}(\mathbb{R}\times\mathcal{M})}\,\,\text{for}\,\, b>\frac{1}{2},  
    \end{align*}
    which will be useful in the next section while dealing with the nonlinear estimates regarding the equation \eqref{hyperbolicschrdingerIVP}. Also using Corollary \ref{stricharrtzboltzmanncor} along with the transference principle, we also obtain similar estimate as above for solutions acted by projection operators with arbitrary center of frequency localization. 
\end{Remark}
\section{Well-posedness}\label{well-posedness}
\subsection{Bilinear Estimates for Loss and Gain Terms}
\begin{lemma}\label{Q^-}
    For any $s>\frac{d}{2}-\frac{1}{4}$, $r>\frac{d}{2}$, we have
    \begin{align*}\label{lossest}
        \Vert \tilde{Q}^-(\tilde{f},\tilde{g})\Vert_{L^2_{T}H^s_xH^r_{\xi}}\lesssim T^{\frac{1}{4}}\Vert \tilde{f}\Vert_{X^{s,r,\frac{1}{2}+}}\Vert\tilde{g} \Vert_{X^{s,r,\frac{1}{2}+}}.
    \end{align*}
\end{lemma}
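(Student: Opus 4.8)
The plan is to exploit that $\tilde{Q}^-(\tilde f,\tilde g)(t,x,\xi)=\tilde f(t,x,\xi)\,\tilde g(t,x,0)$, so the loss term is a product of $\tilde f$ with the $\xi$--independent factor $m(t,x):=\tilde g(t,x,0)$. Since $m$ does not depend on $\xi$, the operator $\langle\nabla_\xi\rangle^r$ commutes with multiplication by $m$, and for each fixed $t,\xi$ one is left with a purely spatial product $m(t,\cdot)\cdot\big(\langle\nabla_\xi\rangle^r\tilde f(t)\big)(\cdot,\xi)$ to be measured in $H^s_x(\mathbb T^d)$. I would then invoke the standard fractional product estimate $\|uv\|_{H^s_x}\lesssim\|u\|_{H^{s+1/4}_x}\|v\|_{H^s_x}$, which is valid since $s+\tfrac14>\tfrac d2$ (here is where $s>\tfrac d2-\tfrac14$ enters; note also $s>\tfrac34>0$ as $d\ge2$), and integrate the resulting inequality in $\xi$ to obtain, pointwise in $t$,
\[
\|\tilde Q^-(\tilde f,\tilde g)(t)\|_{H^s_xH^r_\xi}\lesssim \|m(t)\|_{H^{s+1/4}_x}\,\|\tilde f(t)\|_{H^s_xH^r_\xi}.
\]
Taking the $L^2_T$ norm, applying Hölder in time with $\tfrac12=\tfrac14+\tfrac14$, and using the energy embedding $X^{s,r,\frac12+}\hookrightarrow C_tH^s_xH^r_\xi$ for $\tilde f$, the matter reduces to the single estimate
\[
\|m\|_{L^4_TH^{s+1/4}_x}=\|\tilde g(\cdot,\cdot,0)\|_{L^4_TH^{s+1/4}_x}\lesssim \|\tilde g\|_{X^{s,r,\frac12+}},
\]
the factor $T^{1/4}$ coming from $\|\tilde f\|_{L^4_TH^s_xH^r_\xi}\le (2T)^{1/4}\|\tilde f\|_{L^\infty_TH^s_xH^r_\xi}$.

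For this last estimate I would interpolate: $\|m\|_{L^4_TH^{s+1/4}_x}\le\|m\|_{L^2_TH^{s+1/2}_x}^{1/2}\,\|m\|_{L^\infty_TH^{s}_x}^{1/2}$ (log-convexity of Sobolev norms and Hölder in $t$). The $L^\infty_TH^s_x$ bound is immediate: evaluation at $\xi=0$ commutes with $\langle\nabla_x\rangle^s$ and, since $r>\tfrac d2$, $H^r_\xi\hookrightarrow L^\infty_\xi$, so $\|m(t)\|_{H^s_x}\lesssim\|\tilde g(t)\|_{H^s_xH^r_\xi}\lesssim\|\tilde g\|_{X^{s,r,\frac12+}}$. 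The $L^2_TH^{s+1/2}_x$ bound carries the dispersive input: writing $\widehat m(\tau,n)=\mathrm c\int_{\mathbb R^d}\mathcal F_{t,x}[g](\tau,n,v)\,dv$ and applying Cauchy–Schwarz in $v$ against the weight $\langle\tau+n\cdot v\rangle^{-b}\langle v\rangle^{-r}$ ($b=\tfrac12+$),
\[
|\widehat m(\tau,n)|^2\lesssim \Big(\int_{\mathbb R^d}\langle\tau+n\cdot v\rangle^{-2b}\langle v\rangle^{-2r}\,dv\Big)\int_{\mathbb R^d}\langle\tau+n\cdot v\rangle^{2b}\langle v\rangle^{2r}\,|\mathcal F_{t,x}[g](\tau,n,v)|^2\,dv,
\]
and one proves the first integral is $\lesssim\langle n\rangle^{-1}$ uniformly in $\tau$: splitting $v$ into components parallel and perpendicular to $n$, the decay $\langle v\rangle^{-2r}$ (with $2r>d-1$) handles the $(d-1)$ transversal directions, while integrating $\langle\tau+|n|v_\parallel\rangle^{-2b}$ in the parallel variable gives the gain $|n|^{-1}$ because $2b>1$. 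Multiplying by $\langle n\rangle^{2s+1}$ and summing in $n$ then upgrades the weight $\langle n\rangle^{s}$ to $\langle n\rangle^{s+1/2}$, i.e. $\|m\|_{L^2_tH^{s+1/2}_x}\lesssim\|\tilde g\|_{X^{s,r,b}}$, and restricting to $[-T,T]$ only loses. Combining the three displays gives the claim.

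I expect the half-derivative smoothing bound $\int\langle\tau+n\cdot v\rangle^{-2b}\langle v\rangle^{-2r}\,dv\lesssim\langle n\rangle^{-1}$ to be the crux: it is the only point at which the linear dispersion of \eqref{hyperbolicschrdingerIVP} is used for the loss term, and it is precisely what keeps the spatial product estimate alive at regularities $s\le\tfrac d2$, where $H^s_x$ is not a multiplier algebra. Everything else—the fractional product inequality on $\mathbb T^d$, the Sobolev embeddings in $\xi$ (using $r>\tfrac d2$), the time interpolation, and the $X^{s,r,b}$ energy embedding for $b>\tfrac12$—is routine. In particular the $L^4$ Strichartz estimate of Section~\ref{sectStrichartz} is not needed for $\tilde Q^-$; it is reserved for the gain term, whose spherical average genuinely couples the $x$ and $\xi$ frequencies.
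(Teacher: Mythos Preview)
Your argument is correct. The product estimate $\|uv\|_{H^s_x}\lesssim\|u\|_{H^{s+1/4}_x}\|v\|_{H^s_x}$ is valid since $s+\tfrac14>\tfrac d2$ and $0\le s\le s+\tfrac14$; the interpolation $\|m\|_{L^4_TH^{s+1/4}_x}\le\|m\|_{L^2_TH^{s+1/2}_x}^{1/2}\|m\|_{L^\infty_TH^s_x}^{1/2}$ follows from log-convexity of $H^s$ norms together with H\"older in $t$; and your key smoothing bound
\[
\int_{\mathbb R^d}\langle\tau+n\cdot v\rangle^{-2b}\langle v\rangle^{-2r}\,dv\;\lesssim\;\langle n\rangle^{-1}
\]
is indeed uniform in $\tau$ for $b>\tfrac12$, $r>\tfrac{d-1}{2}$ (a fortiori $r>\tfrac d2$), by exactly the parallel/perpendicular splitting you describe. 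The chain then closes to give the stated $T^{1/4}$ bound.

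However, your approach is genuinely different from the paper's. Contrary to your closing remark, the paper \emph{does} use the $L^4$ Strichartz estimate of Section~\ref{sectStrichartz} in the proof of Lemma~\ref{Q^-}: it distributes $\langle\nabla_x\rangle^s$ by Leibniz, performs a full Littlewood--Paley decomposition in $(x,\xi)$ on both factors, and in every case pulls the low-frequency factor out in $L^4_{T}L^4_{x\xi}$ via Bernstein plus Theorem~\ref{stricharrtzboltzmann} (with a further almost-orthogonality decomposition in the high--low case $N_1\ll N_2$). Your route replaces all of this with a single Kato-type half-derivative smoothing estimate for the trace $\tilde g(\cdot,\cdot,0)$, combined with interpolation against the trivial energy bound. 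This is more economical and makes transparent that the special product structure of $\tilde Q^-$ (the second factor is $\xi$-independent) already yields the $\tfrac14$-derivative gain without Strichartz; the paper's argument has the compensating virtue of running in parallel with the $\tilde Q^+$ estimate, so that both bilinear lemmas are proved with the same toolbox.
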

\begin{proof}
We have
  \begin{align*}
         \Vert \tilde{Q}^{-}(\tilde{f},\tilde{g})\Vert_{L^2_{T}H^s_xH^r_{\xi}}\lesssim \Vert \big(\langle \nabla_x \rangle^s\langle \nabla_{\xi} \rangle^r\tilde{f}(t,x,\xi)\big)\tilde{g}(t,x,0)\Vert_{L^2_{T}L^2_{x\xi}}+\Vert \big(\langle \nabla_{\xi} \rangle^r\tilde{f}(t,x,\xi)\big)\langle \nabla_x \rangle^s\tilde{g}(t,x,0)\Vert_{L^2_{T}L^2_{x\xi}}.
    \end{align*}
We start by estimating the first term above. 
    \begin{align*}
 \Vert \big(\langle \nabla_x \rangle^s\langle \nabla_{\xi} &\rangle^r\tilde{f}(t,x,\xi)\big)\tilde{g}(t,x,0)\Vert_{L^2_{T}L^2_{x\xi}}\\&\sim \Big(\sum_{N,M}\Vert P_{N,M}\big(\sum_{N_1,M_1}P_{N_1,M_1}\langle \nabla_{x} \rangle^s\langle \nabla_{\xi} \rangle^r\tilde{f}(t,x,\xi)\big)\big(\sum_{N_2}P_{N_2}\tilde{g}(t,x,0)\big)\Vert_{L^2_{T}L^2_{x\xi}}^2\Big)^{\frac{1}{2}}=:S_1.   
\end{align*}
\\{\bf Case A.} $N_1\gtrsim N_2$.\\
By Bernstein inequality, we have 
\begin{equation}\label{S_1}
  \begin{aligned} 
    S_1&\lesssim \Big(\sum_{N_1,M_1}\Vert P_{N_1,M_1}\langle \nabla_{x} \rangle^s\langle \nabla_{\xi} \rangle^r\tilde{f}(t,x,\xi)\sum_{N_2}P_{N_2}\tilde{g}(t,x,0)\Vert_{L^2_{T}L^2_{x\xi}}^2\Big)^{\frac{1}{2}} \\& \lesssim \Big(\sum_{N_1,M_1}\Big[\sum_{N_2}\Vert P_{N_1,M_1}\langle \nabla_{x} \rangle^s\langle \nabla_{\xi} \rangle^r\tilde{f}(t,x,\xi)P_{N_2}\tilde{g}(t,x,0)\Vert_{L^2_{T}L^2_{x\xi}}\Big]^2\Big)^{\frac{1}{2}}\\&\lesssim \Big(\sum_{N_1,M_1}\Big[\sum_{N_2}\Vert P_{N_1,M_1}\langle \nabla_{x} \rangle^s\langle \nabla_{\xi} \rangle^r\tilde{f}(t,x,\xi)\Vert_{L^{\infty}_{T}L^2_{x\xi}}\Vert \sum_{M_2}P_{N_2,M_2}\tilde{g}(t,x,\xi)\Vert_{L^2_{T}L^{\infty}_{x\xi}}\Big]^2\Big)^{\frac{1}{2}}
    \\&\lesssim T^{\frac{1}{4}}\Big(\sum_{N_1,M_1}\Big[\sum_{N_2, M_2}N_2^{\frac{d}{4}}M_2^{\frac{d}{4}}\Vert P_{N_1,M_1}\langle \nabla_{x} \rangle^s\langle \nabla_{\xi} \rangle^r\tilde{f}(t,x,\xi)\Vert_{L^{\infty}_{T}L^2_{x\xi}}\Vert P_{N_2,M_2}\tilde{g}(t,x,\xi)\Vert_{L^4_{T}L^4_{x\xi}}\Big]^2\Big)^{\frac{1}{2}}.
\end{aligned}
\end{equation}
In the case $N_2^{d-1}\log N_2\gg M_2$, by using the Strichartz estimate, the resulting bound in \eqref{S_1} can be estimated by
\begin{equation*}
    \begin{aligned}
        T^{\frac{1}{4}}\Big(\sum_{N_1,M_1}\Vert P_{N_1,M_1}\tilde{f}\Vert_{X^{s,r,\frac{1}{2}+}}^2\Big[\sum_{N_2, M_2}N_2^{\frac{d}{2}-\frac{1}{4}-s+}M_2^{\frac{d}{2}-\frac{1}{4}-r}\Vert P_{N_2,M_2}\tilde{g}\Vert_{X^{s,r,\frac{1}{2}+}}\Big]^2\Big)^{\frac{1}{2}}\lesssim T^{\frac{1}{4}}\Vert \tilde{f}\Vert_{X^{s,r,\frac{1}{2}+}}\Vert \tilde{g}\Vert_{X^{s,r,\frac{1}{2}+}}
    \end{aligned}
\end{equation*}
provided that $s,r>\frac{d}{2}-\frac{1}{4}$. While in the case $N_2^{d-1}\log N_2\lesssim M_2$, the application of Strichartz estimate to \eqref{S_1} yields
\begin{equation*}
    \begin{aligned}
        S_1 \lesssim T^{\frac{1}{4}}\Big(\sum_{N_1,M_1}\Vert P_{N_1,M_1}\tilde{f}\Vert_{X^{s,r,\frac{1}{2}+}}^2\Big[\sum_{N_2, M_2}N_2^{\frac{d}{4}-s}M_2^{\frac{d}{2}-r}\Vert P_{N_2,M_2}\tilde{g}\Vert_{X^{s,r,\frac{1}{2}+}}\Big]^2\Big)^{\frac{1}{2}}\lesssim T^{\frac{1}{4}}\Vert \tilde{f}\Vert_{X^{s,r,\frac{1}{2}+}}\Vert \tilde{g}\Vert_{X^{s,r,\frac{1}{2}+}}.
    \end{aligned}
\end{equation*}
for $s>\frac{d}{4}$ and $r>\frac{d}{2}$.
\\{\bf Case B.} $N_1\ll N_2$.\\
In this case, we have
\begin{equation}\label{S_1B}
  \begin{aligned} 
    S_1&\lesssim \Big(\sum_{N_2,M_1}\Vert \sum_{N_1}P_{N_1,M_1}\langle \nabla_{x} \rangle^s\langle \nabla_{\xi} \rangle^r\tilde{f}(t,x,\xi)P_{N_2}\tilde{g}(t,x,0)\Vert_{L^2_{T}L^2_{x\xi}}^2\Big)^{\frac{1}{2}} \\& \lesssim \Big(\sum_{N_2,M_1}\Big[\sum_{N_1}\Vert P_{N_1,M_1}\langle \nabla_{x} \rangle^s\langle \nabla_{\xi} \rangle^r\tilde{f}(t,x,\xi)P_{N_2}\tilde{g}(t,x,0)\Vert_{L^2_{T}L^2_{x\xi}}\Big]^2\Big)^{\frac{1}{2}}\\&\lesssim \Big(\sum_{N_2,M_1}\Big[\sum_{N_1}\Vert P_{N_1,M_1}\langle \nabla_{x} \rangle^s\langle \nabla_{\xi} \rangle^r\tilde{f}(t,x,\xi)\Vert_{L^{\infty}_{T}L^2_{x\xi}}\Vert \sum_{M_2}P_{N_2,M_2}\tilde{g}(t,x,\xi)\Vert_{L^2_{T}L^{\infty}_{x\xi}}\Big]^2\Big)^{\frac{1}{2}}
    \\&\lesssim T^{\frac{1}{4}}\Big(\sum_{N_2,M_1}\Big[\sum_{N_1, M_2}N_2^{\frac{d}{4}}M_2^{\frac{d}{4}}\Vert P_{N_1,M_1}\langle \nabla_{x} \rangle^s\langle \nabla_{\xi} \rangle^r\tilde{f}(t,x,\xi)\Vert_{L^{\infty}_{T}L^2_{x\xi}}\Vert P_{N_2,M_2}\tilde{g}(t,x,\xi)\Vert_{L^4_{T}L^4_{x\xi}}\Big]^2\Big)^{\frac{1}{2}}.
\end{aligned}
\end{equation}
When $N_2^{d-1}\log N_2\gg M_2$, by using the Strichartz estimate, the right side in \eqref{S_1B} can be majorized by
\begin{equation*}
    \begin{aligned}
        T^{\frac{1}{4}}\Big(\sum_{N_2,M_1}&\Big[\sum_{N_1, M_2}N_2^{\frac{d}{2}-\frac{1}{4}-s+}M_2^{\frac{d}{2}-\frac{1}{4}-r}\Vert P_{N_1,M_1}\tilde{f}\Vert_{X^{s,r,\frac{1}{2}+}}\Vert P_{N_2,M_2}\tilde{g}\Vert_{X^{s,r,\frac{1}{2}+}}\Big]^2\Big)^{\frac{1}{2}}\\&\lesssim T^{\frac{1}{4}}\Big(\sum_{N_2,M_1}\Vert P_{M_1}\tilde{f}\Vert_{X^{s,r,\frac{1}{2}+}}^2\Vert P_{N_2}\tilde{g}\Vert_{X^{s,r,\frac{1}{2}+}}^2\Big[\sum_{N_1, M_2}N_1^{\frac{d}{2}-\frac{1}{4}-s+}M_2^{\frac{d}{2}-\frac{1}{4}-r}\Big]^2\Big)^{\frac{1}{2}}\\&\lesssim T^{\frac{1}{4}}\Vert \tilde{f}\Vert_{X^{s,r,\frac{1}{2}+}}\Vert \tilde{g}\Vert_{X^{s,r,\frac{1}{2}+}}
    \end{aligned}
\end{equation*}
provided that $s,r>\frac{d}{2}-\frac{1}{4}$. If $N_2^{d-1}\log N_2\lesssim M_2$, then for $s>\frac{d}{4}$, $r>\frac{d}{2}$, we obtain
\begin{equation*}
    \begin{aligned}
        T^{\frac{1}{4}}\Big(\sum_{N_2,M_1}&\Big[\sum_{N_1, M_2}N_2^{\frac{d}{4}-s}M_2^{\frac{d}{2}-r}\Vert P_{N_1,M_1}\tilde{f}\Vert_{X^{s,r,\frac{1}{2}+}}\Vert P_{N_2,M_2}\tilde{g}\Vert_{X^{s,r,\frac{1}{2}+}}\Big]^2\Big)^{\frac{1}{2}}\lesssim T^{\frac{1}{4}}\Vert \tilde{f}\Vert_{X^{s,r,\frac{1}{2}+}}\Vert \tilde{g}\Vert_{X^{s,r,\frac{1}{2}+}}.
    \end{aligned}
\end{equation*}
As for the second term, we set
\begin{align*}
 \Vert \big(\langle \nabla_{\xi} &\rangle^r\tilde{f}(t,x,\xi)\big)\langle \nabla_x \rangle^s\tilde{g}(t,x,0)\Vert_{L^2_{T}L^2_{x\xi}}\\&\sim \Big(\sum_{N,M}\Vert P_{N,M}\big(\sum_{N_1,M_1}P_{N_1,M_1}\langle \nabla_{\xi} \rangle^r\tilde{f}(t,x,\xi)\big)\big(\sum_{N_2}P_{N_2}\langle \nabla_{x} \rangle^s\tilde{g}(t,x,0)\big)\Vert_{L^2_{T}L^2_{x\xi}}^2\Big)^{\frac{1}{2}}=:S_2.   
\end{align*}
\\{\bf Case A.} $N_1\gtrsim N_2$.\\
In this case, by Bernstein inequality we have the bound \begin{equation}\label{S}
  \begin{aligned} 
    S_2&\lesssim \Big(\sum_{N_1,M_1}\Vert P_{N_1,M_1}\langle \nabla_{\xi} \rangle^r\tilde{f}(t,x,\xi)\sum_{N_2\lesssim N_1}P_{N_2}\langle \nabla_{x} \rangle^s\tilde{g}(t,x,0)\Vert_{L^2_{T}L^2_{x\xi}}^2\Big)^{\frac{1}{2}} \\& \lesssim \Big(\sum_{N_1,M_1}\Big[\sum_{N_2\lesssim N_1}\Vert P_{N_1,M_1}\langle \nabla_{\xi} \rangle^r\tilde{f}(t,x,\xi)P_{N_2}\langle \nabla_{x} \rangle^s\tilde{g}(t,x,0)\Vert_{L^2_{T}L^2_{x\xi}}\Big]^2\Big)^{\frac{1}{2}}\\&\lesssim \Big(\sum_{N_1,M_1}\Big[\sum_{N_2\lesssim N_1}\Vert P_{N_1,M_1}\langle \nabla_{\xi} \rangle^r\tilde{f}(t,x,\xi)\Vert_{L^{\infty}_{T}L^4_xL^2_{\xi}}\Vert \sum_{M_2}P_{N_2,M_2}\langle \nabla_{x} \rangle^s\tilde{g}(t,x,\xi)\Vert_{L^2_{T}L^4_{x}L^{\infty}_{\xi}}\Big]^2\Big)^{\frac{1}{2}}
    \\&\lesssim T^{\frac{1}{4}}\Big(\sum_{N_1,M_1}\Big[\sum_{N_2\lesssim N_1}\sum_{M_2}N_1^{\frac{d}{4}}M_2^{\frac{d}{4}}\Vert P_{N_1,M_1}\langle \nabla_{\xi} \rangle^r\tilde{f}(t,x,\xi)\Vert_{L^{\infty}_{T}L^2_{x\xi}}\Vert P_{N_2,M_2}\langle \nabla_{x} \rangle^s\tilde{g}(t,x,\xi)\Vert_{L^4_{T}L^4_{x\xi}}\Big]^2\Big)^{\frac{1}{2}}.
\end{aligned}
\end{equation}
Next, we consider the following regions in order to apply the Strichartz estimate to the final bound appeared in \eqref{S}.
\\{\bf Case A.1.} $N_2^{d-1}\log N_2\gg M_2$.\\
In this region, using the Strichartz estimate, we estimate the sum in the right hand side of \eqref{S} by
\begin{equation*}
    \begin{aligned}
 \Big(\sum_{N_1,M_1}&N_1^{\frac{d}{2}-2s}\Vert P_{N_1,M_1}\tilde{f}\Vert_{X^{s,r,\frac{1}{2}+}}^2\Big[\sum_{N_2\lesssim N_1}\sum_{M_2}N_2^{\frac{d-1}{4}+}M_2^{\frac{d}{2}-\frac{1}{4}-r}\Vert P_{N_2,M_2}\tilde{g}\Vert_{X^{s,r,\frac{1}{2}+}}\Big]^2\Big)^{\frac{1}{2}}\\&\lesssim \Big(\sum_{N_1,M_1}N_1^{d-\frac{1}{2}-2s+}\Vert P_{N_1,M_1}\tilde{f}\Vert_{X^{s,r,\frac{1}{2}+}}^2\Big)^{\frac{1}{2}} \Vert \tilde{g}\Vert_{X^{s,r,\frac{1}{2}+}}\lesssim \Vert \tilde{f}\Vert_{X^{s,r,\frac{1}{2}+}}\Vert \tilde{g}\Vert_{X^{s,r,\frac{1}{2}+}}      
    \end{aligned}
\end{equation*}
provided that $s, r>\frac{d}{2}-\frac{1}{4}$.
\\{\bf Case A.2.} $N_2^{d-1}\log N_2\lesssim M_2$.\\
Using the Strichartz estimate in relation to this case, the sum in the right hand side of \eqref{S} can be controlled by
\begin{equation*}
    \begin{aligned}
 \Big(\sum_{N_1,M_1}&\Vert P_{N_1,M_1}\tilde{f}\Vert_{X^{s,r,\frac{1}{2}+}}^2\Big[\sum_{N_2,M_2}N_2^{\frac{d}{4}-s}M_2^{\frac{d}{2}-r}\Vert P_{N_2,M_2}\tilde{g}\Vert_{X^{s,r,\frac{1}{2}+}}\Big]^2\Big)^{\frac{1}{2}} \\&\lesssim \Big(\sum_{N_1,M_1}\Vert P_{N_1,M_1}\tilde{f}\Vert_{X^{s,r,\frac{1}{2}+}}^2\sum_{N_2,M_2}\Vert P_{N_2,M_2}\tilde{g}\Vert_{X^{s,r,\frac{1}{2}+}}^2\Big)^{\frac{1}{2}}\sim \Vert \tilde{f}\Vert_{X^{s,r,\frac{1}{2}+}}\Vert \tilde{g}\Vert_{X^{s,r,\frac{1}{2}+}}    
    \end{aligned}
\end{equation*}
provided that $s>\frac{d}{4}$ and $r>\frac{d}{2}$.
\\{\bf Case B.} $N_1\ll N_2$.\\
In this case, the idea is to partition the annulus ${|n_2| \sim N_2}$ into squares of side length $\sim N_1$. Then, by almost orthogonality, we have
\begin{equation}\label{sumloss}
 \begin{aligned} 
    S_2&\lesssim \Big(\sum_{N_2,M_1}\Vert \Big(\sum_{N_1}P_{N_1,M_1}\langle \nabla_{\xi} \rangle^r\tilde{f}(t,x,\xi)\Big)P_{N_2}\langle \nabla_{x} \rangle^s\tilde{g}(t,x,0)\Vert_{L^2_{T}L^2_{x\xi}}^2\Big)^{\frac{1}{2}} \\&\lesssim \Big(\sum_{N_2,M_1} \Big[\sum_{N_1}\Vert\sum_{a\in\mathbb{Z}^d}P_{a+(-N_1,N_1]^d}[P_{N_1,M_1}\langle \nabla_{\xi} \rangle^r\tilde{f}(t,x,\xi)P_{N_2}\langle \nabla_{x} \rangle^s\tilde{g}(t,x,0)]\Vert_{L^2_{T}L^2_{x\xi}}\Big]^2\Big)^{\frac{1}{2}}\\&\sim \Big(\sum_{N_2,M_1} \Big[\sum_{N_1}\Big(\sum_{a\in\mathbb{Z}^d}\Vert P_{N_1,M_1}\langle \nabla_{\xi} \rangle^r\tilde{f}(t,x,\xi)P_{a+(-N_1,N_1]^d}P_{N_2}\langle \nabla_{x} \rangle^s\tilde{g}(t,x,0)\Vert_{L^2_{T}L^2_{x\xi}}^2\Big)^{\frac{1}{2}}\Big]^2\Big)^{\frac{1}{2}}.
\end{aligned}
\end{equation}
We first estimate the norm inside the sum in \eqref{sumloss} as follows
\begin{equation}\label{strichartzloss}
\begin{aligned}
\Vert P_{N_1,M_1}&\langle \nabla_{\xi} \rangle^r\tilde{f}(t,x,\xi)P_{a+(-N_1,N_1]^d}P_{N_2}\langle \nabla_{x} \rangle^s\tilde{g}(t,x,0)\Vert_{L^2_{T}L^2_{x\xi}}
\\&\lesssim
    \Vert P_{N_1,M_1}\langle \nabla_{\xi} \rangle^r\tilde{f}(t,x,\xi)\Vert_{L^{\infty}_{T}L^4_xL^2_{\xi}}\Vert P_{a+(-N_1,N_1]^d}P_{N_2}\langle \nabla_{x} \rangle^s\tilde{g}(t,x,\xi)\Vert_{L^2_{T}L^4_{x}L^{\infty}_{\xi}}\\&\lesssim N_1^{\frac{d}{4}}\Vert P_{N_1,M_1}\langle \nabla_{\xi} \rangle^r\tilde{f}(t,x,\xi)\Vert_{L^{\infty}_{T}L^2_{x\xi}}\Vert \sum_{M_2}P_{a+(-N_1,N_1]^d}P_{N_2,M_2}\langle \nabla_{x} \rangle^s\tilde{g}(t,x,\xi)\Vert_{L^2_{T}L^4_{x}L^{\infty}_{\xi}}\\&\lesssim T^{\frac{1}{4}}\sum_{M_2}N_1^{\frac{d}{4}}M_2^{\frac{d}{4}}\Vert P_{N_1,M_1}\langle \nabla_{\xi} \rangle^r\tilde{f}(t,x,\xi)\Vert_{L^{\infty}_{T}L^2_{x\xi}}\Vert P_{a+(-N_1,N_1]^d}P_{N_2,M_2}\langle \nabla_{x} \rangle^s\tilde{g}(t,x,\xi)\Vert_{L^4_{T}L^4_{x\xi}},
\end{aligned}
\end{equation}
then, proceed as above.
\\{\bf Case B.1.} $N_1^{d-1}\log N_1\gg M_2$.\\
We apply Strichartz estimate to the $L^4$ norm in \eqref{strichartzloss} to obtain 
\begin{align*}
\Vert P_{N_1,M_1}&\langle \nabla_{\xi} \rangle^r\tilde{f}(t,x,\xi)P_{a+(-N_1,N_1]^d}P_{N_2}\langle \nabla_{x} \rangle^s\tilde{g}(t,x,0)\Vert_{L^2_{T}L^2_{x\xi}}
\\&\lesssim
     T^{\frac{1}{4}}\sum_{M_2}N_1^{\frac{d}{2}-\frac{1}{4}-s+}M_2^{\frac{d}{2}-\frac{1}{4}-r}\Vert P_{N_1,M_1}\tilde{f}\Vert_{X^{s,r,\frac{1}{2}+}}\Vert P_{a+(-N_1,N_1]^d}P_{N_2,M_2}\tilde{g}\Vert_{X^{s,r,\frac{1}{2}+}}\\&\lesssim T^{\frac{1}{4}}N_1^{\frac{d}{2}-\frac{1}{4}-s+}\Vert P_{N_1,M_1}\tilde{f}\Vert_{X^{s,r,\frac{1}{2}+}}\Vert P_{a+(-N_1,N_1]^d}P_{N_2}\tilde{g}\Vert_{X^{s,r,\frac{1}{2}+}}
\end{align*}
for $r>\frac{d}{2}-\frac{1}{4}$, which leads to the bound below for the right hand side of \eqref{sumloss}
\begin{equation*}
    \begin{aligned}
     T^{\frac{1}{4}}\Big(\sum_{N_2,M_1}\Big[\sum_{N_1}N_1^{\frac{d}{2}-\frac{1}{4}-s+}&\Vert P_{N_1,M_1}\tilde{f}\Vert_{X^{s,r,\frac{1}{2}+}}\Big(\sum_{a\in \mathbb{Z}^d}\Vert P_{a+(-N_1,N_1]^d}P_{N_2}\tilde{g}\Vert_{X^{s,r,\frac{1}{2}+}}^2\Big)^{\frac{1}{2}}\Big]^2\Big)^{\frac{1}{2}} \\&\lesssim T^{\frac{1}{4}}\Big(\sum_{N_2,M_1}\Vert P_{M_1}\tilde{f}\Vert_{X^{s,r,\frac{1}{2}+}}^2\Vert P_{N_2}\tilde{g}\Vert_{X^{s,r,\frac{1}{2}+}}^2\Big)^{\frac{1}{2}}\\&\sim  T^{\frac{1}{4}}\Vert \tilde{f}\Vert_{X^{s,r,\frac{1}{2}+}}\Vert \tilde{g}\Vert_{X^{s,r,\frac{1}{2}+}} 
    \end{aligned}
\end{equation*}
as long as $s,r>\frac{d}{2}-\frac{1}{4}$.
\\{\bf Case B.2.} $N_1^{d-1}\log N_1\lesssim M_2$.\\
Applying Strichartz estimate in this region to the $L^4$ norm in \eqref{strichartzloss} gives that 
\begin{align*}
\Vert P_{N_1,M_1}&\langle \nabla_{\xi} \rangle^r\tilde{f}(t,x,\xi)P_{a+(-N_1,N_1]^d}P_{N_2}\langle \nabla_{x} \rangle^s\tilde{g}(t,x,0)\Vert_{L^2_{T}L^2_{x\xi}}
\\&\lesssim
     T^{\frac{1}{4}}N_1^{\frac{d}{4}-s}\Vert P_{N_1,M_1}\tilde{f}\Vert_{X^{s,r,\frac{1}{2}+}}\sum_{M_2}M_2^{\frac{d}{2}-r}\Vert P_{a+(-N_1,N_1]^d}P_{N_2,M_2}\tilde{g}\Vert_{X^{s,r,\frac{1}{2}+}}\\&\lesssim T^{\frac{1}{4}}N_1^{\frac{d}{4}-s}\Vert P_{N_1,M_1}\tilde{f}\Vert_{X^{s,r,\frac{1}{2}+}}\Vert P_{a+(-N_1,N_1]^d}P_{N_2}\tilde{g}\Vert_{X^{s,r,\frac{1}{2}+}}
\end{align*}
for $r>\frac{d}{2}$. As a result, proceeding as in the previous case by substituting this into \eqref{sumloss} leads to the desired estimate for $s>\frac{d}{4}$, $r>\frac{d}{2}$. 
\end{proof}
As regards to the proof of the bilinear estimate for the gain term, we rely on the following lemma, which is basically a Hölder's inequality in the $\xi$-direction concerning the term $\tilde{Q}^+$. For the proof, see \cite{Chen-Holmer,ALONSO}.
\begin{lemma}\label{L_xiestimate}
    Let $p,q>\frac{3}{2}$ with $\frac{1}{p}+\frac{1}{q}=\frac{1}{r}$. Then, we have
    \begin{align*}
    \Vert P_{N}^xP_M^{\xi}\tilde{Q}^{+}(P_{N_1}^xP_{M_1}^{\xi}\tilde{f},P_{N_2}^xP_{M_2}^{\xi}\tilde{g})\Vert_{L^r_{\xi}} \lesssim \Vert P_{N_1}^xP_{M_1}^{\xi}\tilde{f}\Vert_{L^p_{\xi}}\Vert P_{N_2}^xP_{M_2}^{\xi}\tilde{g}\Vert_{L^q_{\xi}}.    
    \end{align*}
\end{lemma}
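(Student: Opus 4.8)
The plan is to reduce the stated estimate to a scale-invariant Young-type inequality for the Bobylev-transformed gain operator in the $\xi$ variable alone, and then to establish that inequality by a Carleman-type change of variables; this is the mechanism underlying \cite[Theorem~1]{ALONSO} and the explicit bounds of \cite{Chen-Holmer}, which we invoke at the end.

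First I would dispose of the frequency projections. Since $\tilde{Q}^+(\tilde{f},\tilde{g})(x,\xi)=\int_{\mathbb{S}^{d-1}}\tilde{f}(x,\xi^+)\,\tilde{g}(x,\xi^-)\,d\omega$ depends on $\tilde{f}$ and $\tilde{g}$ only through their values at the same value of $x$, the operator acts by a pointwise product in $x$, so the outer projection $P_N^x$ and the $x$-cut-offs $P_{N_1}^x,P_{N_2}^x$ are Fourier multipliers in $x$ that are bounded on the relevant $x$-spaces and do not interfere with the $\xi$-analysis; the same holds for $P_M^\xi,P_{M_1}^\xi,P_{M_2}^\xi$ on $L^p_\xi,L^q_\xi,L^r_\xi$. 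Hence it is enough to prove, for $\tfrac1p+\tfrac1q=\tfrac1r$ with $p,q>\tfrac32$, the unlocalized bound $\Vert\tilde{Q}^+(F,G)\Vert_{L^r_\xi(\mathbb{R}^d)}\lesssim\Vert F\Vert_{L^p_\xi}\Vert G\Vert_{L^q_\xi}$; this exponent relation is exactly the scaling of $\tilde{Q}^+$, since $\xi^\pm$ are homogeneous of degree one in $\xi$.

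For this bound I would dualize: with $\Vert H\Vert_{L^{r'}_\xi}\le1$ it suffices to control the trilinear form
\[
\mathcal{T}(F,G,H)=\int_{\mathbb{R}^d}\int_{\mathbb{S}^{d-1}}F(\xi^+)\,G(\xi^-)\,H(\xi)\,d\omega\,d\xi .
\]
Using $\xi^+\cdot\xi^-=0$ and $\xi^++\xi^-=\xi$, change variables $(\xi,\omega)\mapsto(y,z):=(\xi^+,\xi^-)$, which maps $\mathbb{R}^d\times\mathbb{S}^{d-1}$ onto the variety $\mathcal{V}=\{(y,z)\in\mathbb{R}^{2d}:y\cdot z=0\}$ (with inverse $\xi=y+z$, $\omega=(y-z)/|y+z|$) and produces a Jacobian weight $W(y,z)\sim|y+z|^{-(d-1)}=|\xi|^{-(d-1)}$, so that
\[
\mathcal{T}(F,G,H)=c\int_{\mathcal{V}}F(y)\,G(z)\,H(y+z)\,W(y,z)\,d\mu(y,z).
\]
Writing $y=\rho\theta$ with $\theta\in\mathbb{S}^{d-1}$, $\rho>0$, and letting $z$ range over the hyperplane $\theta^\perp$, the inner integral is, for each fixed $\theta$, a weighted one-dimensional integral in $\rho$ together with a convolution over the $(d-1)$-plane $\theta^\perp$; one closes the estimate by H\"older in $\rho$, Young/H\"older on $\theta^\perp$, and finally H\"older on $\mathbb{S}^{d-1}$ in $\theta$, the exponents matching because $\tfrac1p+\tfrac1q=\tfrac1r$, while the lower bounds $p,q>\tfrac32$ are what make the homogeneous weight $W$ admissible against the hyperplane slices --- this is the spherical/Radon-type averaging concealed inside $\tilde{Q}^+$. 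Reinserting the harmless cut-offs then yields the lemma.

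The step I expect to be the genuine obstacle is the degeneracy of the substitution $(\xi,\omega)\mapsto(\xi^+,\xi^-)$. The naive shortcut --- Minkowski's inequality in $\omega$ followed by H\"older in $\xi$ --- would require bounding $\Vert F(\xi^+)\Vert_{L^p_\xi}$ uniformly in $\omega$, but the change of variables $\xi\mapsto\xi^+$ satisfies $d\xi=c\,|\xi|^2|\xi^+|^{-2}\,d\xi^+$, whose weight degenerates where $\xi$ is antiparallel to $\omega$ (equivalently near $\xi^+=0$), so the $\omega$-average cannot simply be pulled out and one must genuinely work on $\mathcal{V}$ and exploit its geometry to absorb $W$. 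This is precisely what is carried out in \cite[Theorem~1]{ALONSO} (and in the computations of \cite{Chen-Holmer}), which we cite to conclude. Finally, the angular-kernel version needed for the Remark following Theorem~\ref{maintheorem}, with $\mathbf{b}$ obeying $\int_{\mathbb{S}^{d-1}}\mathbf{b}<\infty$, follows by the same argument, now invoking the $\mathbf{b}$-weighted form of \cite[Theorem~1]{ALONSO} and using $\int_{\mathbb{S}^{d-1}}\mathbf{b}<\infty$ in place of the finiteness of $|\mathbb{S}^{d-1}|$.
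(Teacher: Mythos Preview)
Your sketch is correct and in line with the paper's treatment: the paper does not prove this lemma but simply refers the reader to \cite{Chen-Holmer, ALONSO}, and your outline reproduces the Carleman-type/Alonso--Carneiro mechanism underlying those references before invoking them. One small caveat is that the outer projection $P_N^x$ cannot literally be dismissed as a bounded $x$-multiplier since the norms in the lemma are purely in $\xi$; however, in the paper's applications (the proof of Lemma~\ref{Q^+}) the estimate is used pointwise in $(t,x)$ and then combined with H\"older in $(t,x)$ and the $L^2_{x\xi}$-boundedness of $P_{N,M}$, so this is an imprecision in the lemma's formulation rather than a defect in your argument.
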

\begin{lemma}\label{Q^+}
    For any $s>\frac{d}{2}-\frac{1}{4}$, $r>\frac{d}{2}$, we have
    \begin{align}\label{gainest}
        \Vert \tilde{Q}^+(\tilde{f},\tilde{g})\Vert_{L^2_{T}H^s_xH^r_{\xi}}\lesssim T^{\frac{1}{4}}\Vert \tilde{f}\Vert_{X^{s,r,\frac{1}{2}+}}\Vert\tilde{g} \Vert_{X^{s,r,\frac{1}{2}+}}.
    \end{align}
\end{lemma}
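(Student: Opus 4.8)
The plan is to prove Lemma~\ref{Q^+} by running the argument of Lemma~\ref{Q^-} essentially verbatim, with one substitution: the $\xi$-direction estimate, which for the loss term was the trivial pointwise bound $|\tilde g(t,x,0)|\le\Vert\tilde g(t,x,\cdot)\Vert_{L^\infty_\xi}$, is replaced by the H\"older-type inequality of Lemma~\ref{L_xiestimate}. Two elementary properties of $\tilde Q^+$ make this go through. First, $\tilde Q^+$ is symmetric: in \eqref{lossandgainop} the substitution $\omega\mapsto-\omega$ on $\mathbb S^{d-1}$ interchanges $\xi^+$ and $\xi^-$, so $\tilde Q^+(\tilde f,\tilde g)=\tilde Q^+(\tilde g,\tilde f)$; moreover $\tilde Q^+$ acts as a pointwise product in $x$ (the two factors are evaluated at the same $x$), so its $x$-frequency support lies in the sum of the $x$-frequency supports of the inputs, and the Leibniz decomposition of $\langle\nabla_x\rangle^s$ is carried out exactly as in Lemma~\ref{Q^-}, with ``Case~A'' ($N_1\gtrsim N_2$) and ``Case~B'' ($N_1\ll N_2$). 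Second, conservation of kinetic energy in \eqref{conservation} forces $Q^+(f,g)$ to be velocity-supported in $|v|\lesssim\max\{M_1,M_2\}$ whenever $f,g$ are velocity-supported in $|v|\sim M_1,|v|\sim M_2$; equivalently $P^\xi_M\tilde Q^+(P^\xi_{M_1}\tilde f,P^\xi_{M_2}\tilde g)$ vanishes unless $M\lesssim\max\{M_1,M_2\}$, so $\langle\nabla_\xi\rangle^r$ can be moved, up to an $L^p_\xi$-bounded multiplier, onto the higher-$\xi$-frequency factor. Thus, after these two Leibniz steps, the claim reduces—just as in Lemma~\ref{Q^-}—to summing the dyadic contributions $\Vert P_{N,M}\tilde Q^+(P_{N_1,M_1}F,P_{N_2,M_2}G)\Vert_{L^2_TL^2_{x\xi}}$, with $F,G$ carrying the appropriate $\langle\nabla_x\rangle^s$ and $\langle\nabla_\xi\rangle^r$ factors.

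For each dyadic block I would estimate the $L^2_\xi$-norm by Lemma~\ref{L_xiestimate} (applied with target exponent $2$, i.e.\ $\tfrac1p+\tfrac1q=\tfrac12$, $p,q>\tfrac32$), then take the $L^2_x$-norm by H\"older (one factor in $L^2_x$, the other raised to the required Lebesgue exponent by Bernstein in $x$), take the $L^2_T$-norm keeping one factor in $L^\infty_T$—controlled by $\Vert\cdot\Vert_{X^{s,r,\frac12+}}$ through $X^{s,r,\frac12+}\hookrightarrow L^\infty_TH^{s,r}_{x\xi}$—and the other factor in $L^2_T$, which on $[-T,T]$ costs a factor $T^{1/4}$ to upgrade to $L^4_T$, and finally apply the $L^4$ Strichartz estimate of Theorem~\ref{stricharrtzboltzmann}, which contributes the loss $\max\{M^d,(MN)^{d-1}\log N\}^{1/4}$. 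In Case~B ($N_1\ll N_2$) one first tiles the shell $\{|n_2|\sim N_2\}$ into cubes of side $\sim N_1$, uses almost orthogonality of the outputs over these cubes, and invokes Corollary~\ref{stricharrtzboltzmanncor}, whose loss $\max\{M_2^d,(M_2N_1)^{d-1}\log N_1\}^{1/4}$ involves the small cube side $N_1$ rather than $N_2$; summing in $\ell^2$ over the cubes reconstructs the $X^{s,r,\frac12+}$-norm of $\tilde g$. Carrying out the dyadic sums as in Lemma~\ref{Q^-}, the $(MN)^{d-1}$-terms of these losses impose $s,r>\tfrac d2-\tfrac14$, while the $M^d$-terms together with the Bernstein losses in $\xi$ impose $r>\tfrac d2$, which are precisely the stated hypotheses.

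The step I expect to be the main obstacle is this last dyadic summation in the $\xi$-variable. Unlike for $\tilde Q^-$, where $\tilde g$ was frozen at $\xi=0$ and one factor could be kept loss-free in $L^2_\xi$, Lemma~\ref{L_xiestimate} with target $L^2_\xi$ forces \emph{both} factors into $L^p_\xi,L^q_\xi$ with $p,q>2$, so each picks up a genuine Bernstein loss $M^{d(\frac12-\frac1p)}$ in $\xi$; the delicate point is to organize the case split and the choice of exponents so that the factor inheriting \emph{both} the $\langle\nabla_x\rangle^s$- and the $\langle\nabla_\xi\rangle^r$-weight still produces a convergent series, while the complementary factor (carrying no $\xi$-weight) regains an $M^{-r}$ from its $X^{s,r,\frac12+}$-norm and absorbs its own (nearly $M^{d/2}$) loss. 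I anticipate that closing the first of these requires exploiting the spherical averaging in $\tilde Q^+$ somewhat more than a bare H\"older inequality does—either a gained $L^p$-estimate for the gain operator in place of Lemma~\ref{L_xiestimate}, or an additional near-orthogonality in the output frequency—together with the freedom afforded by the open ranges $s>\tfrac d2-\tfrac14$, $r>\tfrac d2$. Everything else—the $\langle\nabla_x\rangle^s$-Leibniz, the $T^{1/4}$ gain from Hölder in time, and the shell-tiling argument of Case~B with the shifted-center Strichartz estimate—should be a direct transcription of the proof of Lemma~\ref{Q^-}.
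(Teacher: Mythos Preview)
Your overall strategy is right and aligns with the paper: Lemma~\ref{L_xiestimate} replaces the pointwise bound used for the loss term, the constraint $M\lesssim\max\{M_1,M_2\}$ is exploited exactly as you describe, and the endgame is H\"older--Bernstein--Strichartz plus the cube-tiling with Corollary~\ref{stricharrtzboltzmanncor}. However, two things are off.

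First, your anticipated obstacle is not real. The hypothesis of Lemma~\ref{L_xiestimate} is $p,q>\tfrac32$ with $\tfrac1p+\tfrac1q=\tfrac12$, which \emph{allows} $(p,q)=(2,\infty)$. Hence one input can stay in $L^2_\xi$ with no Bernstein loss while the other goes to $L^\infty_\xi$. In the ``matching'' case $N_1\ge N_2$ and $M_1\ge M_2$ (both weights on $\tilde f$) the paper simply uses
\[
\Vert P_{N,M}\tilde Q^+(P_{N_1,M_1}\langle\nabla_x\rangle^s\langle\nabla_\xi\rangle^r\tilde f,\,P_{N_2,M_2}\tilde g)\Vert_{L^2_TL^2_{x\xi}}
\lesssim \Vert\cdot\Vert_{L^\infty_TL^2_{x\xi}}\Vert\cdot\Vert_{L^2_TL^\infty_{x\xi}},
\]
and the dyadic sum closes with no extra input; no smoothing of the gain operator beyond Lemma~\ref{L_xiestimate} is needed.

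Second, and this is the genuine gap, your two-case split in $N_1$ versus $N_2$ is not enough: the paper uses a \emph{four}-way split $(N_1\gtrless N_2)\times(M_1\gtrless M_2)$. The troublesome regime is the \emph{mixed} one, say $N_1\ge N_2$ but $M_2\ge M_1$, where $\langle\nabla_x\rangle^s$ sits on $\tilde f$ while $\langle\nabla_\xi\rangle^r$ sits on $\tilde g$. If you follow your recipe here---keep the $\langle\nabla_x\rangle^s$-factor $\tilde f$ in $L^\infty_TL^2_{x\xi}$ and send $\tilde g$ to $L^4$ via Bernstein + Strichartz---the Strichartz loss on $\tilde g$ is $(N_2M_2)^{(d-1)/4}$, which together with the Bernstein loss $N_2^{d/4}M_2^{d/4}$ leaves an uncancelled $M_2^{d/2-1/4}$ after the $M_2^r$ from the output weight and the $M_2^{-r}$ from $\Vert P_{N_2,M_2}\tilde g\Vert_{X^{s,r,\frac12+}}$ cancel; the $M_2$-sum then diverges. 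Swapping roles (putting $\tilde f$ in $L^4$) leaves instead an uncancelled $N_1^{d/2-1/4}$. The paper's fix is to tile in this case too, but \emph{the opposite way from your Case~B}: one tiles the $N_1$-shell of $\tilde f$ into cubes of side $N_2$, applies Corollary~\ref{stricharrtzboltzmanncor} to the $\tilde f$-cube so that the Strichartz loss involves only the small parameters $N_2,M_1$, and places $\tilde g$ (carrying $\langle\nabla_\xi\rangle^r$) in $L^\infty_TL^4_xL^2_\xi$ via a single Bernstein $N_2^{d/4}$ in $x$. So the cube-tiling is not tied to ``$N_1\ll N_2$'' but rather to whichever factor must go into $L^4$ while already carrying the high $x$-frequency; your proposal applies it only in Case~B and therefore misses exactly the case that does not close otherwise.
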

\begin{proof}
    It suffices to estimate
\begin{equation}\label{gainestdecomposed}
      \begin{aligned}
     \Vert \langle \nabla_x \rangle^s\langle \nabla_{\xi} \rangle^r\tilde{Q}^{+}(\tilde{f},\tilde{g})\Vert_{L^2_{T}L^2_{x\xi}}&\sim \Big(\sum_{N,M}N^{2s}M^{2r}\Vert P_{N,M}\int_{\mathbb{S}^{d-1}}\sum_{N_1,M_1}P_{N_1,M_1}\tilde{f}\sum_{N_2,M_2}P_{N_2,M_2}\tilde{g}\,\text{d}\omega\Vert_{L^2_{T}L^2_{x\xi}}^2\Big)^{\frac{1}{2}}\\&\lesssim \Big(\sum_{N,M}N^{2s}M^{2r}\Big[\sum_{\substack{N_1,M_1\\N_2,M_2}}\Vert P_{N,M}\tilde{Q}^{+}(P_{N_1,M_1}\tilde{f},P_{N_2,M_2}\tilde{g})\Vert_{L^2_{T}L^2_{x\xi}}\Big]^2\Big)^{\frac{1}{2}}.
    \end{aligned}
  \end{equation}
  Note that we make use of the constraint $N\lesssim \max\{N_1,N_2\}$, since 
  \begin{align*}
      P_N^x\tilde{Q}^+(P_{N_1}^x\tilde{f},P_{N_2}^x\tilde{g})(t,x,\xi)&=P_N^x\int_{\mathbb{S}^{d-1}}P_{N_1}^x\tilde{f}(t,x,\xi^+)P_{N_2}^x\tilde{g}(t,x,\xi^-)\,\text{d}\omega\\&=\int_{\mathbb{S}^{d-1}}P_N^x\big(P_{N_1}^x\tilde{f}(t,x,\xi^+)P_{N_2}^x\tilde{g}(t,x,\xi^-)\big)\,\text{d}\omega=0
  \end{align*}
  whenever $N\gg \max\{N_1,N_2\}$. We may take advantage of the constraint $M\lesssim \max\{M_1,M_2\}$ as well. To see why, first note, by uncovering the notations, that
\begin{equation}\label{supportcond}
  \begin{aligned}
\mathcal{F}_{\xi}\big(P_M^{\xi}\tilde{Q}^+(P_{M_1}^{\xi}\tilde{f},P_{M_2}^{\xi}\tilde{g})\big)(v)&= \mathcal{F}_{\xi}\circ \mathcal{F}_{v}^{-1}\big(\varphi_M(v)\mathcal{F}_{\xi}\big(\tilde{Q}^+(P_{M_1}^{\xi}\tilde{f},P_{M_2}^{\xi}\tilde{g})\big)\big)(v)\\&=\varphi_M(v)\mathcal{F}_{\xi}\circ \mathcal{F}_{v}^{-1}\big(Q^+(\varphi_{M_1}f,\varphi_{M_2}g)\big)(v)\\&=\varphi_M(v)Q^+(\varphi_{M_1}f,\varphi_{M_2}g)(v)\\&=\varphi_M(v)\int_{\mathbb{S}^{d-1}}\int_{\mathbb{R}^d}(\varphi_{M_1}f)(v^*)(\varphi_{M_2}g)(u^*)\,\text{d}u\text{d}\omega.
  \end{aligned}
  \end{equation}
   Due to the energy conservation \eqref{conservation}, we have $|v|^2\leq |u^*|^2+|v^*|^2$, hence for $v\in \supp{\varphi_M}$, this inequality gives rise to the following  \begin{align}\label{u^*orv^*}
    |u^*|\gtrsim M\,\,\text{or}\,\,|v^*|\gtrsim M.   
   \end{align} Thus, if it is the case that $M\gg \max\{M_1,M_2\}$, then \eqref{u^*orv^*} implies that $(\varphi_{M_1}f)(v^*)=0$ or $(\varphi_{M_2}g)(u^*)=0$ for $v\in\supp{\varphi_M}$, hence the integral in \eqref{supportcond} vanishes, that is, $P_M^{\xi}\tilde{Q}^+(P_{M_1}^{\xi}\tilde{f},P_{M_2}^{\xi}\tilde{g})=0$ for this choice. As a result, in what follows, we assume both the restrictions $N\lesssim \max\{N_1,N_2\}$ and $M\lesssim \max\{M_1,M_2\}$, and consider the regions below.
  \\{\bf Case A.} $N_1\geq N_2$, $M_1\geq M_2$.\\
  In this case, the right hand side of \eqref{gainestdecomposed} is estimated by
  \begin{align}\label{gainlittle}
\Big(\sum_{N,M}\Big[\sum_{\substack{N_1\gtrsim N,M_1\gtrsim M\\N_2,M_2}}\Big(\frac{N}{N_1}\Big)^s\Big(\frac{M}{M_1}\Big)^r\Vert P_{N,M}\tilde{Q}^{+}(P_{N_1,M_1}\langle \nabla_x\rangle^s\langle \nabla_{\xi}\rangle^r\tilde{f},P_{N_2,M_2}\tilde{g})\Vert_{L^2_{T}L^2_{x\xi}}\Big]^2\Big)^{\frac{1}{2}}.
  \end{align}
  Next, we estimate the $L^2$ norm above in the following regions.
\\{\bf Case A.1.} $M_2\leq N_2^{d-1}\log N_2$.\\
From the Lemma \ref{L_xiestimate}, Hölder, Bernstein, and Strichartz inequalities, it follows that
\begin{equation}\label{strica1}
  \begin{aligned}
 \Vert P_{N,M}\tilde{Q}^{+}(P_{N_1,M_1}\langle \nabla_x\rangle^s\langle \nabla_{\xi}\rangle^r\tilde{f},P_{N_2,M_2}\tilde{g})&\Vert_{L^2_{T}L^2_{x\xi}}\lesssim \Vert P_{N_1,M_1}\langle \nabla_x\rangle^s\langle \nabla_{\xi}\rangle^r\tilde{f}\Vert_{L^{\infty}_{T}L^2_{x\xi}} \Vert P_{N_2,M_2}\tilde{g}\Vert_{L^{2}_{T}L^{\infty}_{x\xi}}\\&\lesssim T^{\frac{1}{4}}N_2^{\frac{d}{4}}M_2^{\frac{d}{4}}\Vert P_{N_1,M_1}\tilde{f}\Vert_{X^{s,r,\frac{1}{2}+}}\Vert P_{N_2,M_2}\tilde{g}\Vert_{L^{4}_{T}L^{4}_{x\xi}}\\&\lesssim T^{\frac{1}{4}}N_2^{\frac{d}{2}-\frac{1}{4}-s+}M_2^{\frac{d}{2}-\frac{1}{4}-r}\Vert P_{N_1,M_1}\tilde{f}\Vert_{X^{s,r,\frac{1}{2}+}}\Vert P_{N_2,M_2}\tilde{g}\Vert_{{X^{s,r,\frac{1}{2}+}}}.    
  \end{aligned}  
\end{equation}
Therefore, upon substituting \eqref{strica1}, the sum in \eqref{gainlittle} is estimated by
\begin{equation*}
    \begin{aligned}
   T^{\frac{1}{4}}\Big(\sum_{N,M}&\Big[\sum_{\substack{N_1\gtrsim N\\M_1\gtrsim M}}\Big(\frac{N}{N_1}\Big)^s\Big(\frac{M}{M_1}\Big)^r\Vert P_{N_1,M_1}\tilde{f}\Vert_{X^{s,r,\frac{1}{2}+}}\sum_{N_2,M_2}N_2^{\frac{d}{2}-\frac{1}{4}-s+}M_2^{\frac{d}{2}-\frac{1}{4}-r}\Big]^2\Big)^{\frac{1}{2}} \Vert\tilde{g} \Vert_{X^{s,r,\frac{1}{2}+}}\\&\lesssim T^{\frac{1}{4}}\Big(\sum_{N,M}\Big[\sum_{\substack{N_1\gtrsim N\\M_1\gtrsim M}}\Big(\frac{N}{N_1}\Big)^s\Big(\frac{M}{M_1}\Big)^r\Vert P_{N_1,M_1}\tilde{f}\Vert_{X^{s,r,\frac{1}{2}+}}\Big]^2\Big)^{\frac{1}{2}} \Vert\tilde{g} \Vert_{X^{s,r,\frac{1}{2}+}}=:T^{\frac{1}{4}}S\Vert\tilde{g} \Vert_{X^{s,r,\frac{1}{2}+}}   
    \end{aligned}
\end{equation*}
provided that $s,r>\frac{d}{2}-\frac{1}{4}$. We write the following and proceed as follows:
\begin{equation*}
\begin{aligned}
 S =:\Big(\sum_{N,M}\Big[\sum_{\substack{N_1\gtrsim N\\M_1\gtrsim M}}K\Big(\frac{N}{N_1},\frac{M}{M_1}\Big)\beta_{N_1,M_1}\Big]^2\Big)^{\frac{1}{2}}
 \end{aligned}
\end{equation*}
where $K\big(\frac{N}{N_1},\frac{M}{M_1} \big)=\big(\frac{N}{N_1}\big)^s\big(\frac{M}{M_1}\big)^r$, and $\beta_{N_1,M_1}=\Vert P_{N_1,M_1}\tilde{f}\Vert_{X^{s,r,\frac{1}{2}+}}$. Letting $N=2^n$, $N_1=2^{n_1}$, $M=2^{m}$, and $M_1=2^{m_1}$ for $n, m, n_1, m_1\in\mathbb{Z}$ yields $K\big(\frac{N}{N_1},\frac{M}{M_1}\big)\sim 2^{-|n-n_1|-|m-m_1|}$ in this region. Therefore, by Young's inequality, we obtain
\begin{align*}
    S\sim \Big(\sum_{m,n}\Big[\sum_{m_1,n_1}2^{-|n-n_1|-|m-m_1|}\beta_{2^{n_1},2^{m_1}}\Big]^2\Big)^{\frac{1}{2}}=\Vert 2^{-|n|-|m|}*\beta_{2^{n},2^m}\Vert_{\ell^2_{m,n}}\lesssim \Vert\beta_{2^{n},2^m}\Vert_{\ell^2_{m,n}}\sim \Vert \tilde{f}\Vert_{X^{s,r,\frac{1}{2}+}}. 
\end{align*}
\\{\bf Case A.2.} $M_2\geq N_2^{d-1}\log N_2$.\\
By the Lemma \ref{L_xiestimate}, Hölder, Bernstein, and Strichartz inequalities, we obtain
\begin{equation*}
  \begin{aligned}
 \Vert P_{N,M}\tilde{Q}^{+}(P_{N_1,M_1}\langle \nabla_x\rangle^s\langle \nabla_{\xi}\rangle^r\tilde{f},P_{N_2,M_2}\tilde{g})\Vert_{L^2_{T}L^2_{x\xi}}\lesssim T^{\frac{1}{4}} N_2^{\frac{d}{4}-s}M_2^{\frac{d}{2}-r}\Vert P_{N_1,M_1}\tilde{f}\Vert_{X^{s,r,\frac{1}{2}+}}\Vert P_{N_2,M_2}\tilde{g}\Vert_{{X^{s,r,\frac{1}{2}+}}}   
  \end{aligned}  
\end{equation*}
which implies for $s>\frac{d}{4},\, r>\frac{d}{2}$:
 \begin{equation*}
    \begin{aligned}
\eqref{gainlittle}&\lesssim T^{\frac{1}{4}}\Big(\sum_{N,M}\Big[\sum_{\substack{N_1\gtrsim N\\M_1\gtrsim M}}\Big(\frac{N}{N_1}\Big)^s\Big(\frac{M}{M_1}\Big)^r\Vert P_{N_1,M_1}\tilde{f}\Vert_{X^{s,r,\frac{1}{2}+}}\sum_{N_2,M_2}N_2^{\frac{d}{4}-s}M_2^{\frac{d}{2}-r}\Big]^2\Big)^{\frac{1}{2}} \Vert\tilde{g} \Vert_{X^{s,r,\frac{1}{2}+}}\\&\lesssim T^{\frac{1}{4}}\Big(\sum_{N,M}\Big[\sum_{\substack{N_1\gtrsim N\\M_1\gtrsim M}}\Big(\frac{N}{N_1}\Big)^s\Big(\frac{M}{M_1}\Big)^r\Vert P_{N_1,M_1}\tilde{f}\Vert_{X^{s,r,\frac{1}{2}+}}\Big]^2\Big)^{\frac{1}{2}} \Vert\tilde{g} \Vert_{X^{s,r,\frac{1}{2}+}}\\&\lesssim T^{\frac{1}{4}}\Vert\tilde{f} \Vert_{X^{s,r,\frac{1}{2}+}}\Vert\tilde{g} \Vert_{X^{s,r,\frac{1}{2}+}}.   
    \end{aligned}
\end{equation*}
\\{\bf Case B.} $N_2\geq N_1$, $M_2\geq M_1$.\\
This case can be dealt in a similar manner as in the Case A., simply by the roles of $\tilde{f}$ and $\tilde{g}$ are switched.
\\{\bf Case C.} $N_1\geq N_2$, $M_2\geq M_1$.\\
In this region, by almost orthogonality, we obtain
\begin{equation}\label{almostorthgain}
\begin{aligned}
 \text{RHS of}&\,\, \eqref{gainestdecomposed}\\&\lesssim \Big(\sum_{N,M}N^{2s}M^{2r}\Big[\sum_{\substack{N_1\gtrsim N, M_2\gtrsim M\\N_2, M_1}}\Big(\sum_{a\in\mathbb{Z}^d}\Vert P_{N,M}\tilde{Q}^+(P_{a+(-N_2,N_2]^d}P_{N_1,M_1}\tilde{f},P_{N_2,M_2}\tilde{g})\Vert_{L^2_{T}L^2_{x\xi}}^2\Big)^{\frac{1}{2}}\Big]^2\Big)^{\frac{1}{2}}.    
\end{aligned}
\end{equation}
Then, we consider the cases below.
\\{\bf Case C.1.} $M_1\leq N_2^{d-1}\log N_2$.\\
The Lemma \ref{L_xiestimate}, Hölder, Bernstein, and Strichartz inequalities lead to
\begin{align*}
    \Vert P_{N,M}\tilde{Q}^+&(P_{a+(-N_2,N_2]^d}P_{N_1,M_1}\tilde{f},P_{N_2,M_2}\tilde{g})\Vert_{L^2_{T}L^2_{x\xi}}\\&\lesssim \Vert P_{a+(-N_2,N_2]^d}P_{N_1,M_1}\tilde{f}\Vert_{L^{2}_{T}L^4_xL^{\infty}_{\xi}}\Vert P_{N_2,M_2}\tilde{g}\Vert_{L^{\infty}_{T}L^4_xL^{2}_{\xi}}\\&\lesssim T^{\frac{1}{4}}N_2^{\frac{d}{4}}M_1^{\frac{d}{4}} \Vert P_{a+(-N_2,N_2]^d}P_{N_1,M_1}\tilde{f}\Vert_{L^{4}_{T}L^4_{x\xi}}\Vert P_{N_2,M_2}\tilde{g}\Vert_{L^{\infty}_{T}L^{2}_{x\xi}}\\&\lesssim T^{\frac{1}{4}}N_2^{\frac{d}{2}-\frac{1}{4}+}M_1^{\frac{d}{2}-\frac{1}{4}}\Vert P_{a+(-N_2,N_2]^d}P_{N_1,M_1}\tilde{f}\Vert_{X^{0,\frac{1}{2}+}}\Vert P_{N_2,M_2}\tilde{g}\Vert_{X^{0,\frac{1}{2}+}}
\end{align*}
which implies that
\begin{equation*}
    \begin{aligned}
 &\text{RHS of}\,\, \eqref{almostorthgain}\\&\lesssim T^{\frac{1}{4}}\Big(\sum_{N,M}N^{2s}M^{2r}\Big[\sum_{\substack{N_1\gtrsim N, M_2\gtrsim M\\N_2, M_1}}N_2^{\frac{d}{2}-\frac{1}{4}+}M_1^{\frac{d}{2}-\frac{1}{4}}\Vert P_{N_2,M_2}\tilde{g}\Vert_{X^{0,\frac{1}{2}+}}\\&\hspace{6.2cm}\times\Big(\sum_{a\in\mathbb{Z}^d}\Vert P_{a+(-N_2,N_2]^d}P_{N_1,M_1}\tilde{f}\Vert_{X^{0,\frac{1}{2}+}}^2\Big)^{\frac{1}{2}}\Big]^2\Big)^{\frac{1}{2}}\\&\sim T^{\frac{1}{4}}\Big(\sum_{N,M}N^{2s}M^{2r}\Big[\sum_{\substack{N_1\gtrsim N, M_2\gtrsim M\\N_2, M_1}}N_2^{\frac{d}{2}-\frac{1}{4}+}M_1^{\frac{d}{2}-\frac{1}{4}}\Vert P_{N_2,M_2}\tilde{g}\Vert_{X^{0,\frac{1}{2}+}}\Vert P_{N_1,M_1}\tilde{f}\Vert_{X^{0,\frac{1}{2}+}}\Big]^2\Big)^{\frac{1}{2}}\\&\sim T^{\frac{1}{4}}\Big(\sum_{N,M}\Big[\sum_{\substack{N_1\gtrsim N, M_2\gtrsim M\\N_2, M_1}}\Big(\frac{N}{N_1}\Big)^s\Big(\frac{M}{M_2}\Big)^rN_2^{\frac{d}{2}-\frac{1}{4}-s+}M_1^{\frac{d}{2}-\frac{1}{4}-r}\Vert P_{N_1,M_1}\tilde{f}\Vert_{X^{s,r,\frac{1}{2}+}}\Vert P_{N_2,M_2}\tilde{g}\Vert_{X^{s,r,\frac{1}{2}+}}\Big]^2\Big)^{\frac{1}{2}}\\&\lesssim T^{\frac{1}{4}}\Big(\sum_{N,M}\Big[\sum_{\substack{N_1\gtrsim N\\ M_2\gtrsim M}}\Big(\frac{N}{N_1}\Big)^s\Big(\frac{M}{M_2}\Big)^r\Vert P_{N_1}\tilde{f}\Vert_{X^{s,r,\frac{1}{2}+}}\Vert P_{M_2}\tilde{g}\Vert_{X^{s,r,\frac{1}{2}+}}\Big]^2\Big)^{\frac{1}{2}}=:T^{\frac{1}{4}}I.    \end{aligned}
\end{equation*}
as long as $s,r>\frac{d}{2}-\frac{1}{4}$. In this case, $I$ could be treated as in Case A.$1$ by switching the roles of $M_1$ and $M_2$, and letting $\beta_{N_1,M_2}=\Vert P_{N_1}\tilde{f}\Vert_{X^{s,r,\frac{1}{2}+}}\Vert P_{M_2}\tilde{g}\Vert_{X^{s,r,\frac{1}{2}+}}$.
\\{\bf Case C.2.} $M_1\geq N_2^{d-1}\log N_2$.\\
By the Lemma \ref{L_xiestimate}, Hölder, Bernstein, and Strichartz inequalities, we get
\begin{equation*}
  \begin{aligned}
 \Vert P_{N,M}\tilde{Q}^+&(P_{a+(-N_2,N_2]^d}P_{N_1,M_1}\tilde{f},P_{N_2,M_2}\tilde{g})\Vert_{L^2_{T}L^2_{x\xi}}\\&\lesssim T^{\frac{1}{4}}N_2^{\frac{d}{4}}M_1^{\frac{d}{4}}\Vert P_{a+(-N_2,N_2]^d}P_{N_1,M_1}\tilde{f}\Vert_{L^{4}_{T}L^{4}_{x\xi}}\Vert P_{N_2,M_2}\tilde{g}\Vert_{X^{0,\frac{1}{2}+}}\\&\lesssim T^{\frac{1}{4}}N_2^{\frac{d}{4}}M_1^{\frac{d}{2}}\Vert P_{a+(-N_2,N_2]^d}P_{N_1,M_1}\tilde{f}\Vert_{X^{0,\frac{1}{2}+}}\Vert P_{N_2,M_2}\tilde{g}\Vert_{X^{0,\frac{1}{2}+}}  
  \end{aligned}  
\end{equation*}
which implies for $s>\frac{d}{4},\, r>\frac{d}{2}$:
\begin{equation*}
    \begin{aligned}
 &\text{RHS of}\,\, \eqref{almostorthgain}\\&\lesssim T^{\frac{1}{4}}\Big(\sum_{N,M}N^{2s}M^{2r}\Big[\sum_{\substack{N_1\gtrsim N, M_2\gtrsim M\\N_2, M_1}}N_2^{\frac{d}{4}}M_1^{\frac{d}{2}}\Vert P_{N_2,M_2}\tilde{g}\Vert_{X^{0,\frac{1}{2}+}}\Big(\sum_{a\in\mathbb{Z}^d}\Vert P_{a+(-N_2,N_2]^d}P_{N_1,M_1}\tilde{f}\Vert_{X^{0,\frac{1}{2}+}}^2\Big)^{\frac{1}{2}}\Big]^2\Big)^{\frac{1}{2}}\\&\sim T^{\frac{1}{4}}\Big(\sum_{N,M}\Big[\sum_{\substack{N_1\gtrsim N, M_2\gtrsim M\\N_2, M_1}}\Big(\frac{N}{N_1}\Big)^s\Big(\frac{M}{M_2}\Big)^rN_2^{\frac{d}{4}-s}M_1^{\frac{d}{2}-r}\Vert P_{N_1,M_1}\tilde{f}\Vert_{X^{s,r,\frac{1}{2}+}}\Vert P_{N_2,M_2}\tilde{g}\Vert_{X^{s,r,\frac{1}{2}+}}\Big]^2\Big)^{\frac{1}{2}}\\&\lesssim T^{\frac{1}{4}}\Big(\sum_{N,M}\Big[\sum_{\substack{N_1\gtrsim N\\ M_2\gtrsim M}}\Big(\frac{N}{N_1}\Big)^s\Big(\frac{M}{M_2}\Big)^r\Vert P_{N_1}\tilde{f}\Vert_{X^{s,r,\frac{1}{2}+}}\Vert P_{M_2}\tilde{g}\Vert_{X^{s,r,\frac{1}{2}+}}\Big]^2\Big)^{\frac{1}{2}}\\&\lesssim T^{\frac{1}{4}}\Vert \tilde{f}\Vert_{X^{s,r,\frac{1}{2}+}}\Vert \tilde{g}\Vert_{X^{s,r,\frac{1}{2}+}}.    \end{aligned}
\end{equation*}
\\{\bf Case D.} $N_2\geq N_1$, $M_1\geq M_2$.\\
This case can be handled as that in the Case C. by swapping the roles of $\tilde{f}$ and $\tilde{g}$. So, the proof is completed.
\end{proof}
\subsection{Proof of Theorem \ref{maintheorem}}
Firstly, let us recall the following standard Fourier restriction norm estimates.
\begin{lemma}\label{basicX^s,bestimates}
  Let $s,r\in \mathbb{R}$ and $\frac{1}{2}<b\leq 1$. Then, we 
  have 
\begin{align}
      \Vert \psi(t)S(t)g\Vert_{X^{s,r,b}}&\lesssim \Vert g\Vert_{H^s_xH^r_{\xi}}, \label{b2} \\ \Vert \psi(t)\int_0^tS(t-t')F(g)dt'\Vert_{X^{s,r,b}}&\lesssim \Vert F(g)\Vert_{X^{s,r,b-1}}, \label{b3}\\ \Vert g\Vert_{X^{s,r,b-1}}&\lesssim \Vert g\Vert_{L_t^pH^s_xH^r_{\xi}}, \quad p\in(1,2],\,\, b\leq \frac{3}{2}-\frac{1}{p}.\label{b4}
  \end{align}    
\end{lemma}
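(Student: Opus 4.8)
The plan is to prove the three estimates in Lemma~\ref{basicX^s,bestimates} by transferring each to a statement about the free propagator and exploiting the structure of the $X^{s,r,b}$ norm in the variable $\sigma:=\tau+n\cdot v$, which is exactly the modulation of the hyperbolic Schr\"odinger flow. The key identity throughout is
\begin{align*}
\Vert F\Vert_{X^{s,r,b}}=\Vert \langle \sigma\rangle^b\langle n\rangle^s\langle v\rangle^r \mathcal{F}_{t,x}[F]\Vert_{\ell^2_nL^2_{\tau,v}}=\Vert S(-t)F\Vert_{H^b_t(H^{s}_xH^{r}_\xi)},
\end{align*}
since conjugating by $S(-t)$ replaces the weight $\langle \tau+n\cdot v\rangle$ by $\langle \tau\rangle$ acting only in $t$. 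Once this reduction is made, all three bounds become classical one-dimensional-in-time facts about the Fourier transform, uniform in the frozen parameters $(n,v)$, so the spatial/velocity weights $\langle n\rangle^s\langle v\rangle^r$ simply ride along.

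For \eqref{b2}: write $S(-t)\big(\psi(t)S(t)g\big)=\psi(t)g$, whose $H^b_t$ norm in $t$ is $\Vert\psi\Vert_{H^b}\Vert g\Vert_{L^2}$ after taking the (trivial, since $g$ is $t$-independent) Fourier transform in time; hence $\Vert\psi(t)S(t)g\Vert_{X^{s,r,b}}\lesssim_\psi \Vert g\Vert_{H^s_xH^r_\xi}$. For \eqref{b4}: again conjugating by $S(-t)$ turns the claim into $\Vert h\Vert_{H^{b-1}_t}\lesssim \Vert h\Vert_{L^p_t}$ with $h=S(-t)g$, valid for $1<p\le 2$ and $b-1\le \tfrac12-\tfrac1p$ by Hausdorff--Young ($\widehat h\in L^{p'}_\tau$) followed by H\"older against $\langle\tau\rangle^{b-1}\in L^{(p'/(p'-2))}$... more cleanly, $\Vert\langle\tau\rangle^{b-1}\widehat h\Vert_{L^2_\tau}\le \Vert\langle\tau\rangle^{b-1}\Vert_{L^q_\tau}\Vert\widehat h\Vert_{L^{p'}_\tau}$ with $\tfrac1q=\tfrac12-\tfrac1{p'}$, and $\langle\tau\rangle^{b-1}\in L^q_\tau$ precisely when $q(1-b)>1$, i.e. $b-1< \tfrac12-\tfrac1p$; the endpoint $b-1=\tfrac12-\tfrac1p$ is handled by a standard refinement (or by noting we only ever use a strict inequality in applications). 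This is exactly the classical $X^{s,b}$ Duhamel lemma and I would either cite Bourgain/Tao or give this one-line argument.

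For \eqref{b3}, the inhomogeneous Duhamel estimate, the cleanest route is the one in Tao's book: conjugate by $S(-t)$ to reduce to the flat statement
\begin{align*}
\Big\Vert \psi(t)\int_0^t h(t')\,dt'\Big\Vert_{H^b_t}\lesssim \Vert h\Vert_{H^{b-1}_t}
\end{align*}
uniformly in the frozen $(n,v)$, again with the weights $\langle n\rangle^s\langle v\rangle^r$ carried inertly. This in turn follows by decomposing $h$ in modulation $\langle\tau\rangle\sim L$ dyadically: the piece with $L\gtrsim 1$ is integrated by the kernel bound $\big|\int_0^t e^{it'\tau}\,dt'\big|\lesssim \min(|t|,|\tau|^{-1})$ and contributes the full gain of one derivative, while the low-modulation piece is treated by writing $\int_0^t = \int_0^\infty-\int_t^\infty$ (or by Taylor expanding $\psi(t)\int_0^t$) and using that $\psi$ is Schwartz in $t$; summing the dyadic pieces costs nothing since $b-1<0<b$ and $b\neq \tfrac12,1$ away from endpoints, but in fact the estimate holds for all $\tfrac12<b\le1$ by the standard argument.

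The main obstacle, such as it is, is purely bookkeeping: one must check that the conjugation $F\mapsto S(-t)F$ genuinely decouples the time variable from $(x,\xi)$ at the level of the norm --- i.e. that $\langle\tau+n\cdot v\rangle$ becomes $\langle\tau\rangle$ after the substitution $\tau\mapsto\tau-n\cdot v$ in the Fourier integral --- and that every constant is independent of $(n,v)$ so that the $\ell^2_n L^2_v$ sum with weights $\langle n\rangle^{2s}\langle v\rangle^{2r}$ survives unchanged; this is immediate from the definition \eqref{Fourierrestrictionspace} but should be stated. After that, \eqref{b2}, \eqref{b3}, \eqref{b4} are verbatim the three classical Fourier restriction norm estimates (see e.g. Bourgain~\cite{Bourgain1993} or Tao's monograph), and there is nothing problem-specific to do --- the hyperbolic, rather than elliptic, nature of the symbol $n\cdot v$ plays no role here, since it only enters through the phase $e^{-itn\cdot v}$ which is removed by the conjugation.
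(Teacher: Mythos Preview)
The paper does not prove this lemma at all --- it is stated with the preface ``let us recall the following standard Fourier restriction norm estimates'' and no argument is given. Your approach (conjugating by $S(-t)$ to reduce each inequality to a one-dimensional-in-time statement with $(n,v)$ frozen, then invoking the classical $X^{s,b}$ machinery) is correct and is precisely the standard proof one would supply.

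Two minor remarks on \eqref{b4}. First, the parenthetical ``we only ever use a strict inequality in applications'' is not accurate here: the paper applies \eqref{b4} in \eqref{nonlinearityest} with $p=\tfrac{2}{3-2b}$, which is exactly the endpoint $b=\tfrac32-\tfrac1p$. Your ``standard refinement'' is the right fix --- the endpoint is the dual Sobolev embedding $H^{1-b}(\mathbb{R})\hookrightarrow L^{p'}(\mathbb{R})$, valid for $\tfrac12<b\le 1$. Second, after obtaining the pointwise-in-$(n,v)$ bound $\Vert\langle\tau\rangle^{b-1}\widehat{h}(\cdot,n,v)\Vert_{L^2_\tau}\lesssim\Vert h(\cdot,n,v)\Vert_{L^p_t}$, one needs Minkowski's inequality $\Vert\cdot\Vert_{\ell^2_nL^2_vL^p_t}\le\Vert\cdot\Vert_{L^p_t\ell^2_nL^2_v}$ (which uses $p\le 2$) to recover the $L^p_tH^s_xH^r_\xi$ norm on the right; this is implicit in your phrase ``uniformly in the frozen parameters'' but worth stating.
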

We only show the boundedness of the contraction map on the ball, which is defined below, for the complete treatment of the conditions of well-posedness for the Boltzmann equation, see \cite{CDP,CDPmoment,Chen-Holmer,Chen-Shen-Zhang}. The contraction argument is applied to the operator
\begin{align*}
    \Gamma \tilde{f}=\psi(t)S(t)\tilde{f}_0+\psi(t)\int_0^tS(t-t')\psi_T(t')\tilde{Q}(\tilde{f}(t'),\tilde{f}(t'))dt'
\end{align*}
on the ball 
\begin{align*}
   B=\{\tilde{f}\in X^{s,r,b}:\Vert \tilde{f}\Vert_{X^{s,r,b}}\leq R\} 
\end{align*}
 where $R=2C\Vert \tilde{f}_0\Vert_{H^{s}_xH^r_{\xi}}$. Note that by the estimate \eqref{b4}, Hölder inequality, Lemmas \ref{Q^-} and \ref{Q^+}, we have
\begin{equation}\label{nonlinearityest}
 \begin{aligned}
    \Vert\psi_T(t)\tilde{Q}(\tilde{f},\tilde{f}) \Vert_{X^{s,r,b-1}}&\lesssim \Vert \psi_T(t)\tilde{Q}(\tilde{f},\tilde{f})\Vert_{L^{\frac{2}{3-2b}}_tH^s_xH^r_{\xi}}\\&\lesssim T^{1-b}\big(\Vert \psi_T(t)\tilde{Q}^-(\tilde{f},\tilde{f})\Vert_{L^2_tH^s_xH^r_{\xi}} + \Vert \psi_T(t)\tilde{Q}^+(\tilde{f},\tilde{f})\Vert_{L^2_tH^s_xH^r_{\xi}}\big)\\&\lesssim T^{\frac{5}{4}-b}\Vert \tilde{f}\Vert_{X^{s,r,b}}^2.
\end{aligned}   
\end{equation}
Then, from \eqref{b2}, \eqref{b3}, and \eqref{nonlinearityest} we obtain, for $\tilde{f}\in B$, that
\begin{align*}
    \Vert \Gamma \tilde{f}\Vert_{X^{s,r,b}}&\leq \Vert \psi(t)S(t)\tilde{f}_0\Vert_{X^{s,r,b}}+\Vert \psi(t)\int_0^tS(t-t')\psi_T(t')\tilde{Q}(\tilde{f}(t'),\tilde{f}(t'))dt'\Vert_{X^{s,r,b}}\\&\leq C\Vert \tilde{f}_0\Vert_{H^s_xH^r_{\xi}}+C\Vert\psi_T(t)\tilde{Q}(\tilde{f},\tilde{f}) \Vert_{X^{s,r,b-1}}\\&\leq \frac{R}{2}+CT^{\frac{5}{4}-b}\Vert \tilde{f}\Vert_{X^{s,r,b}}^2\\&\leq R 
\end{align*}
by taking $T>0$ sufficiently small so that $CT^{\frac{5}{4}-b}R\leq \frac{1}{2}$.
\section*{Acknowledgement}
E.B. and Y.W. were supported by the EPSRC New Investigator Award (grant no. EP/V003178/1).
C.S and N.T. were partially
supported by the ANR project Smooth ANR-22-CE40-0017.

\nocite{*}
\bibliographystyle{abbrv}
\bibliography{reference.bib}

\begin{thebibliography}{10}

\bibitem{Alexandreglobal}
R.~Alexandre, Y.~Morimoto, S.~Ukai, C.-J. Xu, and T.~Yang.
\newblock {Global existence and full regularity of the Boltzmann equation without angular cutoff}.
\newblock {\em Comm. Math. Phys.}, 304(2):513--581, 2011.

\bibitem{Alexandre}
R.~Alexandre, Y.~Morimoto, S.~Ukai, C.-J. Xu, and T.~Yang.
\newblock {Local existence with mild regularity for the Boltzmann equation}.
\newblock {\em Kinetic and Related Models}, 6:1011--1041, 2013.

\bibitem{ALONSO}
R.~J. Alonso and E.~Carneiro.
\newblock {Estimates for the Boltzmann collision operator via radial symmetry and Fourier transform}.
\newblock {\em Advances in Mathematics}, 223(2):511--528, 2010.

\bibitem{Ampatzoglou}
I.~Ampatzoglou, I.~M. Gamba, N.~Pavlović, and M.~Tasković.
\newblock {Global well-posedness of a binary-ternary Boltzmann equation}.
\newblock {\em Ann. Inst. H. Poincaré C Anal. Non Linéaire}, 39(2):327--367, 2022.

\bibitem{Arsenio}
D.~Arsenio.
\newblock {On the global existence of mild solutions to the Boltzmann equation for small data in $L^D$}.
\newblock {\em Comm. Math. Phys.}, 302:453--476, 2011.

\bibitem{bobylev}
A.~V. Bobylev.
\newblock {Exact solutions of the nonlinear Boltzmann equation and the theory of relaxation of a Maxwellian gas}.
\newblock {\em Theoretical and Mathematical Physics}, 60:280 -- 310, 1984.

\bibitem{Bourgain1993}
J.~Bourgain.
\newblock {Fourier transform restriction phenomena for certain lattice subsets and applications to nonlinear evolution equations. I. Schr\"odinger equations}.
\newblock {\em Geometric \& Functional Analysis GAFA}, 3:107--156, 1993.

\bibitem{CDP}
T.~Chen, R.~Denlinger, and N.~Pavlović.
\newblock {Local well-posedness for Boltzmann’s equation and the Boltzmann hierarchy via Wigner Transform}.
\newblock {\em Communications in Mathematical Physics}, 368(2):427--465, 2019.

\bibitem{CDPmoment}
T.~Chen, R.~Denlinger, and N.~Pavlović.
\newblock {Moments and regularity for a Boltzmann equation via Wigner transform}.
\newblock {\em Discrete and Continuous Dynamical Systems}, 39(9):4979--5015, 2019.

\bibitem{CDPsmall}
T.~Chen, R.~Denlinger, and N.~Pavlović.
\newblock {Small data global well-posedness for a Boltzmann equation via bilinear spacetime estimates}.
\newblock {\em Archive for Rational Mechanics and Analysis}, 240:327--381, 2021.

\bibitem{Chen-Holmer}
X.~{Chen} and J.~{Holmer}.
\newblock {Well/ill-posedness bifurcation for the Boltzmann equation with constant collision kernel}.
\newblock {\em arXiv preprint arXiv:2206.11931}, 2022.

\bibitem{Chen-Shen-Zhang2}
X.~{Chen}, S.~{Shen}, and Z.~{Zhang}.
\newblock {Sharp global well-posedness and scattering of the Boltzmann equation }.
\newblock {\em arXiv preprint arXiv:2311.02008}, 2023.

\bibitem{Chen-Shen-Zhang}
X.~{Chen}, S.~{Shen}, and Z.~{Zhang}.
\newblock {Well/Ill-posedness of the Boltzmann equation with soft potential}.
\newblock {\em arXiv preprint arXiv:2310.05042}, 2023.

\bibitem{DiPerna}
R.~J. DiPerna and P.-L. Lions.
\newblock {On the Cauchy problem for Boltzmann equations: global existence and weak stability}.
\newblock {\em Ann. of Math.}, 130(2):321--366, 1989.

\bibitem{DuanR}
R.~Duan, F.~Huang, Y.~Wang, and T.~Yang.
\newblock {Global well-posedness of the Boltzmann equation with large amplitude initial data}.
\newblock {\em Arch. Ration. Mech. Anal.}, 225(1):375--424, 2017.

\bibitem{DuanSakamoto}
R.~Duan, S.~Liu, S.~Sakamoto, and R.~M. Strain.
\newblock {Global mild solutions of the Landau and non-cutoff Boltzmann equations}.
\newblock {\em Comm. Pure Appl. Math.}, 74(5):932--1020, 2021.

\bibitem{DuanR2}
R.~Duan, S.~Liu, and J.~Xu.
\newblock {Global well-posedness in spatially critical Besov space for the Boltzmann equation}.
\newblock {\em Arch. Ration. Mech. Anal.}, 220(2):711--745, 2016.

\bibitem{GradL}
H.~Grad.
\newblock {Asymptotic equivalence of the Navier-Stokes and nonlinear Boltzmann equation}.
\newblock {\em Proc. Symp. Appl. Math}, 17:154--183, 1965.

\bibitem{Gressman}
P.~T. Gressman and R.~M. Strain.
\newblock {Global classical solutions of the Boltzmann equation without angular cut-off}.
\newblock {\em J. Amer. Math. Soc.}, 24(3):771--847, 2011.

\bibitem{GuoY}
Y.~Guo.
\newblock {Classical solutions to the Boltzmann equation for molecules with an angular cutoff}.
\newblock {\em Arch. Ration. Mech. Anal.}, 169(4):305--353, 2003.

\bibitem{GuoY1}
Y.~Guo.
\newblock {Decay and continuity of the Boltzmann equation in bounded domains}.
\newblock {\em Arch. Ration. Mech. Anal.}, 197:713--809, 2010.

\bibitem{Hel}
L.~He and J.~Jiang.
\newblock {Well-posedness and scattering for the Boltzmann equations: soft potential with cut-off}.
\newblock {\em J. Stat. Phys.}, 168(2):470--481, 2017.

\bibitem{Hel2}
L.~He and J.~Jiang.
\newblock {On the Cauchy problem for the cutoff Boltzmann equation with small initial data}.
\newblock {\em J. Stat. Phys.}, 190(3):Paper No. 52, 25, 2023.

\bibitem{KanielShinbrot}
S.~Kaniel and M.~Shinbrot.
\newblock {The Boltzmann equation: I. Uniqueness and local existence}.
\newblock {\em Comm. Math. Phys.}, 58:65--84, 1978.

\bibitem{Keeltao}
M.~Keel and T.~Tao.
\newblock {Endpoint Strichartz estimates}.
\newblock {\em Amer. J. Math.}, 120(5):955--980, 1998.

\bibitem{LiuYang}
T.~Liu, T.~Yang, and S.~H. Yu.
\newblock {Energy method for the Boltzmann equation}.
\newblock {\em Physica D}, 188:178--192, 2004.

\bibitem{Nishimura}
K.~Nishimura.
\newblock {Global existence for the Boltzmann equation in $L^r_vL^{\infty}_tL^{\infty}_x$ spaces}.
\newblock {\em Commun. Pur. Appl. Anal.}, 18(4):1769--1782, 2019.

\bibitem{Sohinger}
V.~Sohinger and R.~M. Strain.
\newblock {The Boltzmann equation, Besov spaces, and optimal time decay rates in $\mathbb{R}^n_x$}.
\newblock {\em Adv. Math.}, 261:274--332, 2014.

\bibitem{Takaoka-Tzvetkov}
H.~Takaoka and N.~Tzvetkov.
\newblock {On 2d nonlinear Schrödinger equations with data on $\mathbb{R}\times\mathbb{T}$}.
\newblock {\em Journal of Functional Analysis}, 182(2):427--442, 2001.

\bibitem{Toscani}
G.~Toscani.
\newblock {Global solution of the initial value problem for the Boltzmann equation near a local Maxwellian}.
\newblock {\em Arch. Rational Mech. Anal.}, 102(3):231--241, 1988.

\bibitem{UkaiS}
S.~Ukai.
\newblock {On the existence of global solutions of mixed problem for non-linear Boltzmann equation}.
\newblock {\em Proc. Japan Acad.}, 50:179--184, 1974.

\end{thebibliography}

\end{document}